\newdefinition{definition}{Definition}
\newtheorem{theorem}{Theorem}
\newproof{proof}{Proof}
\newtheorem{corollary}{Corollary}
\newtheorem{lemma}{Lemma}
\newdefinition{remark}{Remark}
\newdefinition{question}{Question}
\newdefinition{example}{Example}
\begin{document}

\begin{frontmatter}
\title{On groups generated by bi-reversible automata: the two-state case over a changing alphabet}
\author[rvt]{Adam Woryna}
\ead{adam.woryna@polsl.pl}
\address[rvt]{Silesian University of Technology, Institute of Mathematics, ul. Kaszubska 23, 44-100 Gliwice, Poland}

\begin{abstract}
The notion of an automaton over a changing alphabet $X=(X_i)_{i\geq 1}$ is used to define and study automorphism groups of the tree $X^*$ of finite words over $X$. The concept of bi-reversibility for Mealy-type automata is extended to automata over a changing alphabet. It is proved that a non-abelian free group can be generated by a two-state bi-reversible automaton over a changing alphabet $X=(X_i)_{i\geq 1}$ if and only if $X$ is  unbounded. The characterization of groups generated by a two-state bi-reversible automaton over the sequence of binary alphabets is established.
\end{abstract}

\begin{keyword}
changing alphabet\sep transducer\sep Mealy automaton\sep automaton over a changing alphabet\sep group generated by automaton\sep automorphism group \sep free group

\MSC[2010]20E05\sep 20E08\sep 20E26\sep 20F10\sep 20F65\sep 20F29\sep 68Q70\sep 68Q45
\end{keyword}

\end{frontmatter}

\section{Transducers and groups generated by them}

A changing alphabet is simply an infinite sequence $X=(X_i)_{i\geq 1}$ of non-empty finite sets $X_i$ (so-called sets of letters or finite alphabets). If the sequence $(|X_i|)_{i\geq 1}$ of cardinalities of the finite alphabets is bounded, then $X$ is called bounded, otherwise unbounded.

An automaton $A$ (other names: transducer, time-varying automaton) over a changing alphabet $X=(X_i)_{i\geq 1}$ is a finite set $Q$ (set of states) together with an infinite sequence $\varphi=(\varphi_i)_{i\geq 1}$ of transition functions $\varphi_i\colon Q\times X_i\to Q$ and an infinite sequence $\psi=(\psi_i)_{i\geq 1}$ of output functions $\psi_i\colon Q\times X_i\to X_i$. If all these sequences  are constant, then $A$ is called a Mealy-type automaton and it is usually  identified with the quadruple $(X_1, Q, \varphi_1, \psi_1)$. If for each $i\geq 1$ and $q\in Q$ the mapping $\sigma_{i,q}\colon x\mapsto \psi_i(q,x)$ ($x\in X_i$) is a permutation of the alphabet $X_i$, then $A$ is called invertible. The mapping $\sigma_{i,q}$ ($i\geq 1$, $q\in Q$) is called the labeling of a state $q$ in the $i$-th transition of the automaton $A$.

It is convenient to interpret an automaton $A=(X, Q, \varphi, \psi)$ as a machine, which being at any moment $i\geq 1$ in a state $q\in Q$
and reading from the input tape a letter $x\in X_i$, types on the output tape the letter $\psi_i(q, x)$, goes to the state $\varphi_i(q, x)$ and moves both tapes to the next position. This interpretation naturally associates with  each state $q\in Q$ the transformation $A_q\colon X^*\to X^*$ of the tree $X^*$ of finite words over the changing alphabet $X=(X_i)_{i\geq 1}$. The tree $X^*$ consists  of  finite sequences $x_1x_2\ldots x_t$ ($t\geq 1$) such that $x_i\in X_i$ for  every $1\leq i\leq t$ (we also assume the empty word denoted by $\epsilon$); if the sequence $(|X_i|)_{i\geq 1}$ is constant, then we obtain a so-called regular rooted tree. The mapping $A_q\colon X^*\to X^*$ is defined as follows: $A_q(\epsilon):=\epsilon$ and
$$
A_q(x_1x_2\ldots x_t):=\psi_1(q_1,x_1)\psi_2(q_2, x_2)\ldots\psi_t(q_t,x_t),
$$
where the states $q_i\in Q$ are defined recursively $q_1:=q$ and $q_{i+1}:=\varphi_i(q_i,x_i)$ for $1\leq i<t$.

If an automaton $A=(X, Q, \varphi, \psi)$ is invertible, then  the inverse automaton $A^{-1}:=(X, Q, \varphi', \psi')$ is defined as follows:
$$
\varphi'_i(q, x):=\varphi_i(q, \sigma_{i,q}^{-1}(x)),\;\;\;\psi'_i(q,x):=\sigma_{i,q}^{-1}(x)
$$
for all $i\geq 1$, $q\in Q$ and $x\in X_i$. In this case the mappings $A_q$ and $A^{-1}_q$ ($q\in Q$) are mutually inverse automorphisms of the tree $X^*$; that is, they are mutually inverse permutations of the set $X^*$ preserving the empty word and the vertex adjacency (we assume that two words  are adjacent if one of them arises from the second by deleting the last letter). In particular, these permutations preserve the lengths and the common beginnings of words.  We then refer  to the group generated by the permutations $A_q$ for $q\in Q$ (with the composition of mappings as a product) as the group generated by the automaton $A$ and denote it by $G(A)$, i.e., we define
$$
G(A):=\langle A_q\colon q\in Q\rangle.
$$
Each group of the form $G(A)$ is finitely generated; and it is an example of a  residually finite group (as the whole group $Aut(X^*)$ is residually finite -- see~\cite{9}).

The potential for Mealy-type transducers  was discovered in group theory about fifty years ago, when realized that quite simple formulae describing the transition and output functions in an automaton $A$ may result in some  exotic properties of the group $G(A)$. The flagship example is the famous Grigorchuk group generated by a five-state Mealy automaton over the binary alphabet (for more on  interesting properties of groups generated by Mealy automata see the survey paper \cite{9} or the monograph~\cite{10}). The notion of an  automaton over a changing alphabet was introduced in \cite{15}, where we obtained a useful  combinatorial tool to define and study  automorphism groups of a spherically homogeneous rooted tree, which is not necessarily a regular rooted tree.

An important problem in the theory of groups generated by transducers is to verify which finitely generated abstract groups $G$ can be realized as groups of the form $G(A)$, as well as to find  simple and applicable formulae describing the corresponding automaton $A$. In particular, the problem of finding  an explicit realization of a non-abelian free group  turned out to be  far from trivial. It was  solved by Glasner and Mozes (\cite{1}) in 2005, but even in 80s of the last century it was conjectured that the so-called Aleshin-Vorobets automaton generates a non-abelian free group of rank three, which M. Vorobets and Ya. Vorobets finally confirmed in \cite{2}.

The question on the existence of a 2-state automaton over a bounded changing alphabet which generates a non-abelian free group  is still open. In particular, we do not know if there is a 2-state Mealy automaton generating $\mathcal{F}_2$ (non-abelian free group of rank two). Indeed, in all known realizations of  non-abelian free groups by Mealy automata the generating automata have  more than two states (\cite{1,10,13,14}). Apart from Mealy automata, there are two various realizations of $\mathcal{F}_2$ by a 2-state automaton over an unbounded changing alphabet (see Examples~\ref{ex1}--\ref{ex2} in the next section). It turns out that all these transducers are bi-reversible.

\section{Bi-reversible automata over a changing alphabet}\label{sec2}

The concept of reversibility and bi-reversibility for Mealy automata was introduced by Macedo\'{n}ska, Nekrashevych and Sushchanskyy (\cite{3}), who found a connection between the group  of  automorphisms  defined by bi-reversible automata over a finite alphabet $X$ and the commensurator of the non-abelian free group $\mathcal{F}_{|X|}$. Further development was due to Glasner and Mozes (\cite{1}), who associate with each bi-reversible Mealy automaton a square complex and its universal covering. This allowed  to construct the first examples of Mealy automata generating non-abelian free groups.

Let us recall that a Mealy automaton $A=(X, Q, \varphi, \psi)$ is reversible if  every letter $x\in X$ acts (via the transition function $\varphi$)  like a permutation of the set  of states, i.e. for each $x\in X$ the mapping $q\mapsto \varphi(q,x)$ ($q\in Q$)  defines a permutation of the set $Q$. If $A$ is invertible and both the automata $A$ and $A^{-1}$ are reversible, then $A$ is called bi-reversible.

Recently, there has been a lot of interest in this kind of Mealy automata.  For example, Bondarenko, D'Angeli and Rodaro (\cite{4}) constructed a bi-reversible 3-state Mealy automaton over the ternary alphabet $X=\{0,1,2\}$ which  generates the lamplighter group $\mathbb{Z}_3\wr\mathbb{Z}$, providing  the first example of a not finitely presented group generated by a bi-reversible Mealy automaton. In \cite{6} Klimann dealt with the semigroups generated by  2-state reversible Mealy automata and showed that every  such a semigroup is either finite or free. In \cite{5} Godin and Klimann referred to the well-known Burnside problem and proved that connected reversible Mealy automata with a prime number of states can  not generate infinite torsion groups. D'Angeli and Rodaro (\cite{7}) associated with each bi-reversible Mealy automaton $A$ an automaton $(\partial A)^-$,  which they called the enriched dual of $A$, and next, they showed how the boundary dynamics of the semigroup generated by $(\partial A)^-$ characterizes the algebraic properties (in particular, the property of being not free) of the group $G(A)$.

It turns out that the notion of reversibility and bi-reversibility can be naturally extended to  automata over a changing alphabet.

\begin{definition}
We call an automaton $A=(X, Q, \varphi, \psi)$ over a changing alphabet $X=(X_i)_{i\geq 1}$  reversible if for every $i\geq 1$ and every letter $x\in X_i$ the mapping $q\mapsto \varphi_i(q,x)$ ($q\in Q$)  defines a permutation of the set $Q$. If $A$ is invertible and both $A$ and $A^{-1}$ are  reversible automata, then we call the automaton $A$  bi-reversible.
\end{definition}

An obvious example of bi-reversible automata are the invertible automata $A=(X, Q, \varphi, \psi)$ in which  the transition functions are defined in the diagonal way, i.e. the equality  $\varphi_i(q,x)=q$ holds for all $i\geq 1$, $q\in Q$ and $x\in X_i$. We  showed in \cite{8} that the automata of  diagonal type provide a universal construction for all finitely generated residually finite groups.

\begin{theorem}(\cite{8}, Theorem~1)\label{t0}
For any unbounded changing alphabet $X=(X_i)_{i\geq 1}$ and for any finitely generated residually-finite  group $G$ with an arbitrary finite generating set $S$, there is an invertible automaton $A=(X, S, \varphi, \psi)$ of  diagonal type such that the group $G(A)$ generated by  this automaton is isomorphic to $G$.
\end{theorem}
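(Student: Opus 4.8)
The plan is to exploit the fact that a diagonal automaton has a particularly transparent action: since $\varphi_i(q,x)=q$ for all $i,q,x$, the machine never changes its state, so each generator $A_s$ (for $s\in S$) acts on a word $x_1x_2\ldots x_t$ simply by applying the level-permutation $\sigma_{i,s}\in\mathrm{Sym}(X_i)$ to the $i$-th letter, independently of the other letters. Consequently the diagonal automorphisms form a subgroup of $\mathrm{Aut}(X^*)$ canonically isomorphic to the direct product $\prod_{i\geq 1}\mathrm{Sym}(X_i)$, and $G(A)$ is exactly the subgroup generated by the tuples $(\sigma_{i,s})_{i\geq 1}$. Thus the whole problem reduces to choosing, for each $s\in S$ and each $i\geq 1$, a permutation $\sigma_{i,s}$ of $X_i$ so that two conditions hold. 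First, for every $i$ the assignment $s\mapsto\sigma_{i,s}$ extends to a homomorphism that factors through $G$; this guarantees that $s\mapsto A_s$ induces a well-defined surjection $G\twoheadrightarrow G(A)$. Second, for every non-trivial $g\in G$ there is at least one level $i$ on which the image of $g$ acts non-trivially; this guarantees injectivity, hence $G(A)\cong G$.

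To meet the first condition it suffices to make each level-map factor through a finite quotient of $G$, and here \emph{residual finiteness} enters. Since $G$ is finitely generated it is countable; enumerate its non-trivial elements as $g_1,g_2,\ldots$. For each $n$ residual finiteness supplies a finite quotient $\rho_n\colon G\to F_n$ with $\rho_n(g_n)\neq 1$. Composing with the left-regular representation $F_n\hookrightarrow\mathrm{Sym}(F_n)$, which is faithful, yields a permutation representation $\tau_n\colon G\to\mathrm{Sym}(F_n)$ on a set of size $|F_n|$ with $\tau_n(g_n)\neq 1$. Each $\tau_n$ is a genuine homomorphism out of $G$, so placing a copy of $\tau_n$ on a single level will respect all relations of $G$.

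It remains to fit these representations into the alphabets, and this is where \emph{unboundedness} of $X$ is used. Unboundedness of the sequence $(|X_i|)_{i\geq 1}$ is equivalent to the statement that for every $m$ the set $\{i\colon |X_i|\geq m\}$ is infinite; indeed, were it finite for some $m$, the sequence would be bounded by the maximum of $m$ and its finitely many large values. Hence one can choose \emph{distinct} indices $i_1,i_2,\ldots$ with $|X_{i_n}|\geq |F_n|$ for every $n$. On the level $i_n$ one identifies an $|F_n|$-element subset of $X_{i_n}$ with $F_n$ and defines $\sigma_{i_n,s}$ to act there as $\tau_n(s)$ and to fix the remaining letters; on every level not of the form $i_n$ one sets $\sigma_{i,s}=\mathrm{id}$. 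Padding a permutation with fixed points is a homomorphism, so each $s\mapsto\sigma_{i,s}$ still factors through $G$, securing the surjection $G\twoheadrightarrow G(A)$. For injectivity, if $g=g_n\neq 1$ then on the level $i_n$ its image is $\tau_n(g_n)\neq\mathrm{id}$, so $A_g\neq\mathrm{id}$; thus the surjection is injective and $G(A)\cong G$. Finally $A$ is invertible, since each $\sigma_{i,s}$ is a permutation, and of diagonal type by construction.

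I expect the only real obstacle to be conceptual rather than computational: one must simultaneously arrange that each level sees only a \emph{finite} quotient of $G$ (so that no spurious relations are imposed and the map $G\to G(A)$ is well defined) while collectively the levels \emph{separate} all elements of $G$ (so that the map is injective). Residual finiteness delivers the separating quotients and unboundedness delivers enough independent levels of sufficient size to carry them; the diagonal structure is precisely what lets a single level certify the non-triviality of $A_g$. The bookkeeping of choosing the distinct indices $i_n$ and verifying the direct-product identification is routine once this dichotomy is in place.
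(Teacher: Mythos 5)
Your proposal is correct and follows essentially the same route as the paper's (the proof is deferred to \cite{8}, but the paper explicitly describes it as the embedding $G(A)\hookrightarrow\prod_{i\geq 1}Sym(X_i)$ for diagonal automata combined with a nonconstructive deduction from residual finiteness): separating finite quotients are placed, via their regular representations, on distinct levels whose alphabets are large enough, which is exactly what unboundedness provides. The only cosmetic omission is the trivial remark that when $G$ is finite the enumeration of non-trivial elements terminates, in which case the same construction works with identity labelings on all remaining levels.
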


In the case of a bounded changing alphabet $X=(X_i)_{i\geq 1}$ any invertible and diagonally defined automaton $A$ over $X$ generates a finite group. Indeed, if $A=(X, Q, \varphi, \psi)$ is of diagonal type and  $\sigma_{i,q}\in Sym(X_i)$  is the labeling of the state $q\in Q$ in the $i$-th transition ($i\geq 1$) of $A$, then the mapping $A_q\mapsto (\sigma_{i,q})_{i\geq 1}$ ($q\in Q$) induces an    embedding of $G(A)$ into the Cartesian product $\prod_{i\geq 1} Sym(X_i)$. In the case of a bounded changing alphabet this product embeds into an infinite direct power of a finite symmetric group, which is a locally finite group (see, for example, Lemma~1 in~\cite{16}). Since $G(A)$ is finitely generated, it must be a finite group in this case.

The automaton $A$ from Theorem~\ref{t0} can be constructed in  such a way that  the following self-similarity property holds (see also \cite{8}): for each $i\geq 0$ the mapping $A_q\mapsto A^{(i)}_q$ ($q\in Q$) induces an isomorphism  $G(A)\simeq G(A^{(i)})$, where $A^{(i)}$ denotes the $i$-shift of $A$, i.e. the automaton $(X^{(i)}, Q, \varphi^{(i)}, \psi^{(i)})$ such that  $X^{(i)}:=(X_j)_{j>i}$, $\varphi^{(i)}:=(\varphi_j)_{j>i}$ and $\psi^{(i)}:=(\psi_j)_{j>i}$. Nevertheless, the problem with the diagonal realization from  Theorem~\ref{t0}  is that  its existence can be easily deduced from  the assumption that the group  $G$ is residually finite. This implies that the formula for the output functions in this realization is highly nonconstructive (see the proof of Theorem~1 in \cite{8}).

\begin{example}\label{ex1}
In \cite{12}, we discovered a more explicit realization of $\mathcal{F}_2$ by an  automaton $A$ of diagonal type with the 2-element set $Q:=\{q_1, q_2\}$ of states,  which works  over an arbitrary, unbounded changing alphabet $X=(X_i)_{i\geq 1}$. To this end, in the set of freely reduced group words in the symbols $a$ and $b$, we provided a lexicographic order $\prec$ induced by
$$
\mbox{\it the empty word}\prec a\prec a^{-1}\prec b\prec b^{-1}.
$$
Next, we constructed  two permutations of the set $\mathbb{N}:=\{1,2,\ldots\}$, also denoted by  $a$ and $b$, with the following property: if $W:=W(a,b)$ is the group word in the $n$-th position ($n\geq 1$) in the above ordering, then the permutation $W\colon \mathbb{N}\to\mathbb{N}$ maps   $1$ into $n$. Unfortunately, the  formulae for $a$ and $b$ turned out to be quite complicated, as we obtained:
$$
\begin{array}{l}
a(n):=\left\{
\begin{array}{lcl}
  2, & {\rm if} & n=1, \\
  n+4\cdot3^k, & {\rm if}  & 2\cdot3^k\leq n<3^{k+1},\;\;k\geq 0, \\
  n-2\cdot3^k, &{\rm if}  & 3^{k+1}\leq n<4\cdot 3^k,\;\;k\geq 0, \\
  n+3^{k+1}, &{\rm if} & 4\cdot3^k\leq n<2\cdot3^{k+1},\;\;k\geq 0,
\end{array}
\right.\\
b(n):=\left\{
\begin{array}{lcl}
  4, &{\rm if} & n=1, \\
  n+10\cdot3^k, &{\rm if} & 2\cdot3^k\leq n<5\cdot3^k,\;\;k\geq 0, \\
  n-\lfloor13\cdot3^{k-1}\rfloor, &{\rm if} & 5\cdot3^k\leq n<17\cdot 3^{k-1},\;\;k\geq 0, \\
  n-4\cdot 3^k, &{\rm if} & 17\cdot3^{k-1}\leq n<2\cdot3^{k+1},\;\;k\geq 0.
\end{array}
\right.
\end{array}
$$
Now, if we  assume that $X_i:=\{1, \ldots, r_i\}$ for some  $r_i\geq 1$ ($i\geq 1$), then  the output functions in the automaton $A$ can be defined as follows:
$$
\psi_i(q_1, x):=\left\{
\begin{array}{ll}
  a(x),&{\rm if}\; x\in a^{-1}(X_i), \\
  a_i(x),&{\rm if}\;x\notin a^{-1}(X_i),
\end{array}
\right.\;\;
\psi_i(q_2,x):=\left\{
\begin{array}{ll}
b(x),&{\rm if}\; x\in b^{-1}(X_i),\\
b_i(x),&{\rm if}\; x\notin b^{-1}(X_i),
\end{array}
\right.
$$
where $a_i, b_i$ ($i\geq 1$) are any bijections of the form:
$$
a_i\colon X_i\setminus a^{-1}(X_i)\rightarrow X_i\setminus a(X_i),\;\;\;\;\;b_i(x)\colon X_i\setminus b^{-1}(X_i)\rightarrow X_i\setminus b(X_i).
$$
In this way, we obtained a 2-state bi-reversible automaton $A$ of  diagonal type for which $G(A)\simeq \mathcal{F}_2$ (in \cite{12}, we proved this isomorphism in the case $X_i=\{1,\ldots, i\}$ but the proof works in the general case).
\end{example}

It would be interesting to simplify the output functions from Example~\ref{ex1} and obtain a realization of the  group $\mathcal{F}_2$ by a  2-state automaton of  diagonal type.  Such a simplification  is justified by the fact that for any infinite collection $\mathcal{C}$ of non-abelian finite simple groups the free group $\mathcal{F}_2$ is residually $\mathcal{C}$ (\cite{17}). Hence, it follows that  for every infinite sequence $(G_i)_{i\geq 1}$ of non-abelian finite simple groups with the unbounded sequence $(|G_i|)_{i\geq 1}$ of their orders it is possible to choose  the 2-element generating sets $\{\alpha_i, \beta_i\}$ of $G_i$ ($i\geq 1$) such that the sequences $\alpha=(\alpha_i)_{i\geq 1}$ and $\beta=(\beta_i)_{i\geq 1}$ generate in the Cartesian product $\prod_{i\geq 1} G_i$ a group isomorphic to $\mathcal{F}_2$. Nevertheless, there is not known (according  to our knowledge) any explicit construction of such generating   sets.

\begin{example}\label{ex2}
Let $Y=(Y_i)_{i\geq 1}$ be an arbitrary changing alphabet such that  $|Y_i|\geq 2$ for every $i\geq 1$. For each $i\geq 1$ let us choose two letters $x^0_i, x^1_i\in Y_i$ and let $\tau_i,\pi_i\in Sym(Y_i)$ be two permutations of the set $Y_i$ such that $\tau_i$ is an arbitrary transposition and $\pi_i$ is an arbitrary long cycle satisfying    $\tau_i(x^0_i)=\pi_i(x^0_i)=x^1_i$. In particular, the 2-element set $\{\tau_i, \pi_i\}$ constitutes a standard  generating set of the symmetric group $Sym(Y_i)$. Let $A=(Y, Q, \varphi, \psi)$ be an automaton in which $Q:=\{q_1, q_2\}$ and the transition and output functions are defined as follows:
$$
\varphi_i(q, x)=\left\{
\begin{array}{lcl}
q_2, &{\rm if}&x=x^0_i,\;q=q_1,\\
q_1,&{\rm if}& x=x^0_i,\; q=q_2,\\
q, &{\rm if}&  x\neq x^0_i,
\end{array}
\right.\;\;\;
\psi_i(q, x)=\left\{
\begin{array}{lcl}
\pi_i(x),&{\rm if} &q=q_1,\\
\tau_i(x),&{\rm if} &q=q_2.
\end{array}
\right.
$$
Obviously, the automaton $A$ is invertible and in the inverse automaton $A^{-1}=(Y, Q, \varphi', \psi')$, we have:
$$
\varphi'_i(q, x)=\left\{
\begin{array}{lcl}
q_2, &{\rm if}&x=x^1_i,\;q=q_1,\\
q_1,&{\rm if}& x=x^1_i,\; q=q_2,\\
q, &{\rm if}&  x\neq x^1_i,
\end{array}
\right.\;\;\;
\psi'_i(q, x)=\left\{
\begin{array}{lcl}
\pi_i^{-1}(x),&{\rm if} &q=q_1,\\
\tau_i^{-1}(x),&{\rm if} &q=q_2.
\end{array}
\right.
$$
In particular,  the automaton $A$ is bi-reversible. In \cite{11}, we have shown that if the changing alphabet $Y=(Y_i)_{i\geq 1}$ is unbounded and the sequence $(|Y_i|)_{i\geq 1}$ is non-decreasing, then the group $G(A)=\langle A_{q_1}, A_{q_2}\rangle$ is a non-abelian free group of rank 2 generated freely by the automorphisms $A_{q_1}$ and $A_{q_2}$. We believe the requirement that  the sequence $(|Y_i|)_{i\geq 1}$ is non-decreasing is unnecessary, but in \cite{11}, we used it  to prove the main result (see the proof of Proposition~7 and the proof of Theorem~1 on  p. 6431 therein).
\end{example}

Comparing the above two automaton realizations of the group $\mathcal{F}_2$, it seems that the one from Example~\ref{ex2} is more  handy to study the geometric action of $\mathcal{F}_2$ on the corresponding tree. We now use this realization to show that the action may be level-transitive (i.e. any two words of equal length belong to the same orbit).

\begin{theorem}
Let $Y=(Y_i)_{i\geq 1}$ be a changing alphabet such that $|Y_i|\geq 3$ for every $i\geq 1$ and the inequality $i\neq j$ implies ${\rm GCD}(|Y_i|-1, |Y_j|-1)=1$. Then the group $G(A)$ generated by the automaton $A$ from Example~\ref{ex2} acts level-transitively on the tree $Y^*$.
\end{theorem}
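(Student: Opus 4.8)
My plan is to prove level-transitivity by induction on the word length $n$, exploiting the self-similar (wreath-recursive) structure of the automaton $A$ from Example~\ref{ex2} together with the arithmetic supplied by the coprimality hypothesis. Throughout write $n_i:=|Y_i|$.

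First I record the local data. The generator $A_{q_1}$ acts at the root through the $n_i$-cycle $\pi_i$ and $A_{q_2}$ through the transposition $\tau_i=(x^0_i\,x^1_i)$; since $\tau_i$ transposes the two $\pi_i$-consecutive letters $x^0_i$ and $x^1_i=\pi_i(x^0_i)$, the two root permutations already generate the full symmetric group $\mathrm{Sym}(Y_1)$, which settles transitivity on level~$1$. I also record the elementary but crucial fact that $\tau_i\pi_i$ is an $(n_i-1)$-cycle fixing $x^0_i$ (and $\pi_i\tau_i$ an $(n_i-1)$-cycle fixing $x^1_i$): these are exactly the cycles whose lengths $n_i-1$ the hypothesis makes pairwise coprime.

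For the inductive step I would use the standard criterion: a subgroup of $\mathrm{Aut}(Y^*)$ acts transitively on level $n+1$ as soon as it acts transitively on level $n$ and, at one vertex $w$ of level $n$, the stabiliser $\mathrm{Stab}_{G(A)}(w)$ induces a transitive group $K_w\le\mathrm{Sym}(Y_{n+1})$ on the children of $w$ (all such $K_w$ being conjugate once level $n$ is transitive). Reading off generators of $K_w$ from the wreath recursion — the sections of $A_{q_1},A_{q_2}$ at a vertex are again $A_{q_1},A_{q_2}$ over the shifted alphabet, the state toggling exactly when the letter $x^0$ is read — one sees that conjugating $\tau_{n+1}$ by powers of $\pi_{n+1}$ produces the consecutive transpositions $(\pi_{n+1}^{k}(x^0_{n+1})\,\ \pi_{n+1}^{k}(x^1_{n+1}))$, while the first-return maps to the fibre over $w$ contribute a power $\pi_{n+1}^{m}$, with $m$ an explicit exponent built from the cardinalities of the preceding alphabets, and the $(n_{n+1}-1)$-cycle $\tau_{n+1}\pi_{n+1}$. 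Thus $K_w$ always contains an $(n_{n+1}-1)$-cycle, transitive on $Y_{n+1}\setminus\{x^0_{n+1}\}$, and transitivity on all of $Y_{n+1}$ reduces to exhibiting one element of $K_w$ that carries the fixed letter $x^0_{n+1}$ into this orbit.

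The heart of the argument — and the step I expect to be the main obstacle — is exactly this last point, and it is here that coprimality of the numbers $n_i-1$ must be used. One has to track the return exponents $m$ through the whole tower of sections and show that the $(n_i-1)$-cycles coming from the various levels $i\le n+1$ cannot simultaneously fix $x^0_{n+1}$: pairwise coprimality of their lengths should force, by a Chinese-Remainder-type computation, some return permutation on $Y_{n+1}$ not to fix $x^0_{n+1}$, which together with the $(n_{n+1}-1)$-cycle yields a single orbit of full size $n_{n+1}$. (At the first step this already follows from $n_1\neq n_2$, itself guaranteed by the hypothesis, so the genuine strength of the condition is felt only at the deeper levels, where $m$ is a nontrivial product.) Feeding this back into the criterion and iterating — the shifted alphabet $(Y_i)_{i\ge 2}$ again satisfies $|Y_i|\ge 3$ and the coprimality condition — completes the induction and shows that $G(A)$ acts level-transitively. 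The delicate part, requiring the most care, is the bookkeeping of the section exponents and the verification that pairwise coprimality of the $n_i-1$ is precisely what prevents a common fixed child at every level.
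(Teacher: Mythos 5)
Your proposal is an outline whose decisive step is never carried out, so as it stands it has a genuine gap. The inductive step requires showing that the group $K_w\le \mathrm{Sym}(Y_{n+1})$ of permutations induced on the children of a level-$n$ vertex $w$ by sections of elements of $\mathrm{Stab}_{G(A)}(w)$ is transitive. You assert that $K_w$ contains the conjugates of $\tau_{n+1}$ by powers of $\pi_{n+1}$, a power $\pi_{n+1}^{m}$ coming from ``first-return maps'', and the $(|Y_{n+1}|-1)$-cycle $\tau_{n+1}\pi_{n+1}$, but none of these memberships is justified by exhibiting an actual stabilizer element whose section at $w$ realizes them. This is not a routine verification: although the sections of the generators at level-$n$ vertices are again $a_{n+1}=A^{(n)}_{q_1}$ and $b_{n+1}=A^{(n)}_{q_2}$ (with root actions $\pi_{n+1}$, $\tau_{n+1}$), an arbitrary word in $\pi_{n+1}^{\pm1},\tau_{n+1}^{\pm1}$ need not arise as the section at $w$ of an element that \emph{fixes} $w$; the constraint of fixing $w$ couples the section at $w$ to the action on the first $n$ letters, and this coupling is exactly where the coprimality hypothesis has to do its work. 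You acknowledge this yourself (``the step I expect to be the main obstacle'', ``should force, by a Chinese-Remainder-type computation''), but the bookkeeping of exponents is precisely the content of the theorem, and it is missing.

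For comparison, the paper avoids the stabilizer criterion altogether: it proves directly that every word $y_1\ldots y_t$ lies in the orbit of the single word $x^1_1x^1_2\ldots x^1_t$. The key computation is the recursion for $c_i:=a_ib_i^{-1}$, namely $c_i(xw)=\sigma_i(x)c_{i+1}^{-1}(w)$ if $x=x^1_i$ and $c_i(xw)=\sigma_i(x)c_{i+1}(w)$ otherwise, where $\sigma_i=\pi_i\tau_i^{-1}$ is the $(|Y_i|-1)$-cycle fixing only $x^1_i$; iterating gives $c_1^N(x_1\ldots x_t)=\sigma_1^{\pm N}(x_1)\cdots\sigma_t^{\pm N}(x_t)$ with sign changes occurring only at letters equal to $x^1_i$. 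The transporting element is then written down explicitly as $g=c_1^{N_1}b_1^{-1}c_1^{N_0}b_1$, with $N_0,N_1$ produced by two applications of the Chinese Remainder Theorem. If you want to salvage your induction, the same elements localize it: $c_1$ stabilizes the diagonal vertex $w=x^1_1\cdots x^1_n$ and its section there induces $\sigma_{n+1}^{\pm1}$ (transitive on $Y_{n+1}\setminus\{x^1_{n+1}\}$), while $b_1^{-1}c_1^{N}b_1$ with $N=\prod_{i\le n}(|Y_i|-1)$ also stabilizes $w$ and induces $\tau_{n+1}^{-1}\sigma_{n+1}^{N}\tau_{n+1}$, which moves $x^1_{n+1}$ because $N$ is coprime to $|Y_{n+1}|-1$; these two elements make $K_w$ transitive. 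That explicit construction, in one form or the other, is what your write-up lacks.
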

\begin{proof}
Let us denote
$$
a_i:=A^{(i-1)}_{q_1},\;b_i:=A^{(i-1)}_{q_2},\;c_i:=a_ib^{-1}_i,\;\;\;i\geq 1,
$$
where $A^{(i)}=(Y^{(i)}, Q, \varphi^{(i)}, \psi^{(i)})$ ($i\geq 0$) is the $i$-shift of $A$. For all $i\geq 1$, $x\in Y_{i}$ and $w\in (Y^{(i)})^*$ we obtain by the definition of $A$ the following recursions:
$$
a_i(xw)=\left\{
\begin{array}{ll}
\pi_i(x)b_{i+1}(w), &{\rm if}\;x=x^0_i,\\
\pi_i(x)a_{i+1}(w), &{\rm if}\;x\neq x^0_i,
\end{array}
\right.b_i(xw)=\left\{
\begin{array}{ll}
\tau_i(x)a_{i+1}(w), &{\rm if}\;x=x^0_i,\\
\tau_i(x)b_{i+1}(w), &{\rm if}\;x\neq x^0_i.
\end{array}
\right.
$$
Consequently, we have
$$
b^{-1}_i(xw)=\left\{
\begin{array}{ll}
\tau_i(x)a^{-1}_{i+1}(w), &{\rm if}\;x=x^1_i,\\
\tau_i(x)b^{-1}_{i+1}(w), &{\rm if}\;x\neq x^1_i,
\end{array}
\right.
$$
and hence
\begin{eqnarray*}
c_i(xw)=a_ib^{-1}_i(xw)&=&\left\{
\begin{array}{ll}
a_i(\tau_i(x)a^{-1}_{i+1}(w)), &{\rm if}\;x=x^1_i\\
a_i(\tau_i(x)b^{-1}_{i+1}(w)), &{\rm if}\;x\neq x^1_i
\end{array}
\right.=\\
&=&\left\{
\begin{array}{ll}
\sigma_i(x)c^{-1}_{i+1}(w), &{\rm if}\;x=x^1_i,\\
\sigma_i(x)c_{i+1}(w), &{\rm if}\;x\neq x^1_i,
\end{array}
\right.
\end{eqnarray*}
where $\sigma_i:=\pi_i\tau_i^{-1}$ for every $i\geq 1$. The permutation $\sigma_i\in Sym(Y_i)$ is a cycle of length $|Y_i|-1$ and  $x^1_i\in Y_i$ is the only letter which is  fixed by this permutation. Thus, we have $\sigma_i(x^1_i)=x^1_i$ and for every $x\in Y_i\setminus\{x^1_i\}$ and every integer $N$ the equality $\sigma^N_i(x)=x$ implies the divisibility $|Y_i|-1\mid N$. In particular, for all $i\geq1$, $x\in Y_i$, $w\in (Y^{(i)})^*$ and every integer $N$, we obtain the following formula:
\begin{equation}\label{jc}
c_i^N(xw)=\left\{
\begin{array}{ll}
xc^{-N}_{i+1}(w), &{\rm if}\;x=x^1_i,\\
\sigma_i^N(x)c^N_{i+1}(w), &{\rm if}\;x\neq x^1_i.
\end{array}
\right.
\end{equation}

Let $v=y_1y_2\ldots y_t\in Y^*$ be an arbitrary word. It is enough to show that there is  $g\in G(A)=\langle a_1, b_1\rangle$ such that $v=g(w)$, where $w:=x^1_1x^1_2\ldots x^1_t$. If we denote $w_1:=b_1(w)$, then by the recursion for $b_i$, we have $w_1=x^0_1x^0_2\ldots x^0_t$ and hence, by the recursion (\ref{jc}), we obtain:
$$
c^N_1(w_1)=\sigma_1^N(x^0_1)\sigma^N_2(x^0_2)\ldots\sigma^N_t(x^0_t),\;\;\;N\geq 1.
$$
Let us denote
$$
I:=\{1\leq i\leq t\colon y_i=x^1_i\},\;\;\;I':=\{1,2,\ldots, t\}\setminus I.
$$
Since for each $1\leq i\leq t$ we have $|Y_i|\geq 3$ and for all $1\leq i,j\leq t$ the inequality $i\neq j$ implies that the numbers $|Y_i|-1$ and $|Y_j|-1$ are coprime, we obtain  by the Chinese Remainder  Theorem that there is $N_0\geq 1$ such that $\sigma_i^{N_0}(x^0_i)=x^0_i$ for every $i\in I$ and $\sigma_i^{N_0}(x^0_i)\notin\{x^0_i, x^1_i\}$ for every $i\in I'$.
Thus, if we denote
$$
w_2:=c_1^{N_0}(w_1)=\sigma_1^{N_0}(x^0_1)\sigma_2^{N_0}(x^0_2)\ldots\sigma_t^{N_0}(x^0_t),
$$
then  by the inequalities $\sigma_i^{N_0}(x^0_i)\neq x^1_i$ ($1\leq i\leq t$) and by the  recursion for $b_i^{-1}$, we obtain:
$b_1^{-1}(w_2)=z_1z_2\ldots z_t$, where  $z_i:=\tau_i\sigma^{N_0}(x^0_i)$ for every $1\leq i\leq t$. Hence, we obtain: $z_i=x^1_i$ for every  $i\in I$ and $z_i\neq x^1_i$ for every $i\in I'$. Next,  for every integer $N$, we obtain by (\ref{jc}):
$$
 c_1^Nb_1^{-1}(w_2)=c_1^N(z_1z_2\ldots z_t)=\sigma_1^{E_{N,1}}(z_1)\sigma_2^{E_{N,2}}(z_2)\ldots \sigma_t^{E_{N,t}}(z_t),
$$
where for every $1\leq i\leq t$ we have: $E_{N,i}=N$ or $E_{N,i}=-N$ depending on the parity of the number of elements of the set $I\cap\{1,\ldots, i\}$. Note that for each  $i\in I$ we have: $\sigma_i^{E_{N,i}}(z_i)=\sigma_i^{E_{N,i}}(x^1_i)=x^1_i=y_i$. Since for every  $i\in I'$ we  have: $z_i\neq x^1_i$ and $y_i\neq x^1_i$,  we can use again our assumption and the Chinese Remainder Theorem to obtain that there is $N_1\geq 1$ such that $\sigma_i^{E_{N_1,i}}(z_i)=y_i$ for every $i\in I'$. Hence, if we denote
$$
w_3:=c_1^{N_1}b_1^{-1}(w_2)=\sigma_1^{E_{N_1,1}}(z_1)\sigma_2^{E_{N_1,2}}(z_2)\ldots \sigma_t^{E_{N_1,t}}(z_t),
$$
then we have $w_3=y_1y_2\ldots y_t=v$. Thus for the  element $g:=c_1^{N_1}b_1^{-1}c_1^{N_0}b_1\in G(A)$, we obtain $v=g(w)$.\qed
\end{proof}

\begin{remark}
Note that if $|Y_1|=|Y_2|=2$, then the action of $G(A)$ is not level-transitively, as the  restriction of this action  to the subtree $Y_1\times Y_2\subseteq Y^*$ coincides with the action of the cyclic group of order two.
\end{remark}

One could naturally ask what happens, if we set the transparent output functions from Example~\ref{ex2} to a 2-state automaton of  diagonal type. If
 $A$ is such an automaton and for every $i\geq 2$ we take in the symmetric group $Sym(i)$ of the set $\{1,2,\ldots, i\}$ the transposition $\tau_i:=(1,2)$ and the long cycle $\pi_i:=(1,2,\ldots, i)$, then there exists a subset $I\subseteq \mathbb{N}$ such that the group $G(A)$ is  isomorphic to the group
$$
G_I:=\langle \tau_I, \pi_I\rangle\leq \prod_{i\in I}Sym(i),
$$
where $\tau_I:=(\tau_i)_{i\in I}$ and $\pi_I:=(\pi_i)_{i\in I}$. The groups of the form $G_I$ ($I\subseteq \mathbb{N}$) were studied in~\cite{18,19,15}. By Example~4.1 in \cite{18}, it follows that  all these groups are amenable, and hence we can not realize $\mathcal{F}_2$ in this way. However, the same  reasoning as in~\cite{15} (see Section~5 therein) shows that if the set $I$ is infinite (which corresponds to the case when the changing alphabet $Y$ is unbounded), then  the semigroup generated by $\tau_I$ and $\pi_I$ is free. In particular, the group $G_I$ is of exponential growth in this case. Further, it follows by Proposition~4.1 in \cite{19} that if  $I, I'\subseteq \mathbb{N}\setminus\{1,2,3,4\}$ and $I\neq I'$, then the groups $G_I$ and $G_{I'}$ are not isomorphic. As a result, we obtain the following corollary.

\begin{corollary}
For an arbitrary unbounded changing alphabet $X=(X_i)_{i\geq 1}$ there are uncountably many pairwise non-isomorphic groups of the form $G(A)$, where $A$ is a 2-state bi-reversible automaton of  diagonal type over $X$.
\end{corollary}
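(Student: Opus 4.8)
The plan is to use the realization of the groups $G_I$ described just before the statement, together with Proposition~4.1 of \cite{19}, which ensures that distinct subsets $I\subseteq\mathbb{N}\setminus\{1,2,3,4\}$ give pairwise non-isomorphic groups $G_I$. Since there are uncountably many such subsets, it suffices to realize $G_J$ as $G(A)$ for a $2$-state bi-reversible automaton $A$ of diagonal type over the given changing alphabet $X$, for each $J$ ranging over a suitable uncountable family of subsets.

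First I would note that since $X=(X_i)_{i\geq 1}$ is unbounded, the set $R:=\{|X_i|\colon i\geq 1\}$ of occurring alphabet cardinalities is infinite, hence so is $R':=R\setminus\{1,2,3,4\}$. Fix a bijection $\beta_i\colon X_i\to\{1,\ldots,|X_i|\}$ for each $i$. For a subset $J\subseteq R'$ define a $2$-state automaton $A_J=(X,\{q_1,q_2\},\varphi,\psi)$ of diagonal type (so $\varphi_i(q,x):=q$ throughout) whose state labelings at the $i$-th transition are $\sigma_{i,q_1}:=\beta_i^{-1}\pi_{|X_i|}\beta_i$ and $\sigma_{i,q_2}:=\beta_i^{-1}\tau_{|X_i|}\beta_i$ when $|X_i|\in J$, and the identity permutation of $X_i$ when $|X_i|\notin J$; here $\tau_r:=(1,2)$ and $\pi_r:=(1,2,\ldots,r)$. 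Each $A_J$ is invertible, and as observed earlier any invertible automaton of diagonal type is bi-reversible, its inverse being again diagonal and hence reversible.

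Next I would identify $G(A_J)$ with $G_J$. Via the embedding $A_q\mapsto(\sigma_{i,q})_{i\geq 1}$ the group $G(A_J)$ sits inside $\prod_{i\geq 1}Sym(X_i)$; since both generators act trivially at every position $i$ with $|X_i|\notin J$, every element of $G(A_J)$ is supported on the positions with $|X_i|\in J$, and the projection onto these coordinates is injective. For a fixed value $r\in J$ let $P_r:=\{i\colon|X_i|=r\}$ and transport each factor $Sym(X_i)$ ($i\in P_r$) to $Sym(r)$ via $\beta_i$; under this identification both generators restrict on the block $Sym(r)^{P_r}$ to the diagonal elements determined by $\pi_r$ and $\tau_r$ respectively. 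Collapsing each diagonal block to a single copy of $Sym(r)$ is therefore an isomorphism sending $A_{q_1}\mapsto\pi_J$ and $A_{q_2}\mapsto\tau_J$, whence $G(A_J)\cong G_J$.

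Finally, since $R'$ is infinite it has uncountably many subsets, and by Proposition~4.1 of \cite{19} the groups $G_J\cong G(A_J)$ are pairwise non-isomorphic for distinct $J\subseteq R'\subseteq\mathbb{N}\setminus\{1,2,3,4\}$; this yields uncountably many pairwise non-isomorphic groups of the required form. I expect the only genuine point to verify to be the isomorphism $G(A_J)\cong G_J$: one must check that a cardinality realized at several positions of $X$ contributes just one diagonal factor, so that the repeated coordinates collapse correctly and the isomorphism type of $G(A_J)$ depends exactly on the set $J$ of cardinalities used, and not on their multiplicities.
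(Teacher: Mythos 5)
Your proposal is correct and takes essentially the same route as the paper: the paper's (implicit) proof of this corollary likewise realizes each $G_J$, for $J$ ranging over subsets of the infinite set of cardinalities occurring in $X$, by a diagonal automaton carrying the transparent labelings $\pi_{|X_i|},\tau_{|X_i|}$ at positions with $|X_i|\in J$ and trivial labelings elsewhere, and then invokes Proposition~4.1 of \cite{19} together with the uncountability of the power set of an infinite set. Your explicit verification that repeated cardinalities collapse onto a single diagonal factor is exactly the content of the paper's claim that $G(A)\simeq G_I$ for some subset $I\subseteq\mathbb{N}$.
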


\section{On generation of $\mathcal{F}_2$ by  2-state bi-reversible automata}

In view of the above results, it is interesting to verify for which changing alphabets $X=(X_i)_{i\geq 1}$    there is a 2-state bi-reversible automaton over $X$  which  generates $\mathcal{F}_2$, as well as to provide for every such  alphabet $X$  an explicit and naturally defined  realization of  $\mathcal{F}_2$ by a suitable automaton over $X$. Here we give the solution to both of these problems.

\begin{theorem}\label{t1}
Let $X=(X_i)_{i\geq 1}$ be an arbitrary changing alphabet.  Then the free group $\mathcal{F}_2$ can be generated by a 2-state bi-reversible automaton over $X$ if and only if $X$ is unbounded. On the other hand, if $X$ is bounded and $A=(X, \{q_1, q_2\}, \varphi, \psi)$ is an arbitrary 2-state bi-reversible automaton, then the element $A_{q_1}^{-1}A_{q_2}\in G(A)$ is of finite order, and, additionally, if this element is trivial (i.e. $A_{q_1}=A_{q_2}$), then the group $G(A)$ is finite. In particular, there is no 2-state bi-reversible Mealy automaton generating $\mathcal{F}_2$.
\end{theorem}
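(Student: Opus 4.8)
The plan is to reduce the whole statement to one structural fact: when $X$ is bounded, the automorphism $A_{q_1}^{-1}A_{q_2}$ has finite order. The forward (``if'') direction requires no new work, since Example~\ref{ex1} already exhibits, over every unbounded $X$, a $2$-state bi-reversible automaton $A$ with $G(A)\simeq\mathcal{F}_2$. For the converse and the remaining assertions, note that once $A_{q_1}^{-1}A_{q_2}$ is known to have finite order, $G(A)$ cannot be isomorphic to $\mathcal{F}_2$: if this element is nontrivial it gives a torsion element in $G(A)$, contradicting that free groups are torsion-free; if it is trivial then $G(A)=\langle A_{q_1}\rangle$ is cyclic, again not $\mathcal{F}_2$. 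Thus the theorem hinges on the finite-order claim, plus a short argument that the cyclic case is in fact finite.

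First I would pin down the shape of a $2$-state bi-reversible automaton. Reversibility means that for each $i$ and $x\in X_i$ the map $q\mapsto\varphi_i(q,x)$ is a permutation of $\{q_1,q_2\}$, hence either the identity or the transposition; this splits each alphabet as $X_i=F_i\sqcup S_i$, where $x\in F_i$ keeps both states fixed and $x\in S_i$ swaps them. Writing $\sigma_{i,1},\sigma_{i,2}$ for the labelings of $q_1,q_2$ in the $i$-th transition, I would then form $A^{-1}$ and impose reversibility of $A^{-1}$. The key point is that this condition forces $\sigma_{i,1}(S_i)=\sigma_{i,2}(S_i)$, and consequently $\sigma_{i,1}(F_i)=\sigma_{i,2}(F_i)$, for every $i$. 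From this the permutation $\rho_i:=\sigma_{i,1}^{-1}\sigma_{i,2}\in Sym(X_i)$ must preserve the partition $X_i=F_i\sqcup S_i$, i.e. $\rho_i(F_i)=F_i$ and $\rho_i(S_i)=S_i$.

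Next, setting $a_i:=A^{(i-1)}_{q_1}$, $b_i:=A^{(i-1)}_{q_2}$ and $d_i:=a_i^{-1}b_i$, I would carry out the same kind of section computation as in the previous proof to obtain the self-similar recursion
$$
d_i(xw)=\rho_i(x)\,d_{i+1}^{\,\varepsilon_i(x)}(w),\qquad \varepsilon_i(x)=+1 \text{ for } x\in F_i,\ \varepsilon_i(x)=-1 \text{ for } x\in S_i .
$$
Because $\rho_i$ preserves $F_i$ and $S_i$, the sign $\varepsilon_i$ is constant along the $\rho_i$-orbit of $x$, so iterating gives the clean formula $d_i^N(xw)=\rho_i^N(x)\,d_{i+1}^{\,N\varepsilon_i(x)}(w)$, and hence, reading a word letter by letter, the $j$-th output letter of $d_1^N$ is $\rho_j^{\pm N}$ applied to the $j$-th input letter. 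Since $\rho^N=\operatorname{id}\iff\rho^{-N}=\operatorname{id}$, this yields $d_1^N=\operatorname{id}\iff \rho_j^N=\operatorname{id}$ for all $j$; equivalently, the order of $A_{q_1}^{-1}A_{q_2}$ equals $\operatorname{lcm}_{j\geq1}\operatorname{ord}(\rho_j)$. When $X$ is bounded, say $|X_i|\leq M$ for all $i$, each $\rho_j$ lies in a symmetric group of degree at most $M$, so $\operatorname{ord}(\rho_j)$ divides the exponent of $Sym(\{1,\dots,M\})$; therefore $A_{q_1}^{-1}A_{q_2}$ has finite order, which is the heart of the theorem.

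It remains to dispose of the cyclic case and the Mealy corollary. If $A_{q_1}=A_{q_2}$, comparing sections at every vertex forces $\sigma_{i,1}=\sigma_{i,2}$ for all $i$, so $A_{q_1}$ acts diagonally as $(\sigma_{i,1})_{i\geq1}$; over a bounded alphabet such a diagonal automorphism has order dividing the exponent of $Sym(\{1,\dots,M\})$, whence $G(A)=\langle A_{q_1}\rangle$ is finite. Finally, a Mealy automaton is one over a constant alphabet, which is bounded, so the impossibility over bounded alphabets specializes to the Mealy case. The hardest part, I expect, is the second step: extracting from reversibility of $A^{-1}$ exactly the statement that $\rho_i$ preserves the partition $F_i\sqcup S_i$. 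This invariance is precisely what decouples the $\pm$ signs from the powers and turns the order computation into the transparent $\operatorname{lcm}$ formula; without it the sign bookkeeping in $d_i^N$ would not close up.
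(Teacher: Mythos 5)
Your proposal is correct and takes essentially the same approach as the paper: your partition $X_i=F_i\sqcup S_i$ is the paper's $Z_i,T_i$, your key lemma that reversibility of $A^{-1}$ forces $\sigma_{i,1}(S_i)=\sigma_{i,2}(S_i)$ (hence that $\rho_i=\sigma_{i,1}^{-1}\sigma_{i,2}$ preserves the partition) is exactly the paper's identity $\alpha_i(Z_i)=\beta_i(Z_i)$, $\alpha_i(T_i)=\beta_i(T_i)$, and the power recursion for $d_i^N$, the bounded-alphabet order bound, the diagonal argument in the case $A_{q_1}=A_{q_2}$, and the appeal to the diagonal construction for unbounded $X$ all mirror the paper's proof. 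The only (harmless) embellishment is your sharper statement that the order of $A_{q_1}^{-1}A_{q_2}$ equals $\operatorname{lcm}_{j\geq 1}\operatorname{ord}(\rho_j)$, where the paper only exhibits the upper bound $N=r!$ with $r:=\sup_{i\geq 1}|X_i|$.
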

\begin{proof}
Let $A=(X, Q, \varphi, \psi)$ be an arbitrary bi-reversible automaton with the 2-element set $Q:=\{q_1, q_2\}$ of states. For each $i\geq 1$ let us define the subsets $Z_i, T_i\subseteq X_i$ as follows:
\begin{equation}\label{e2}
Z_i:=\{x\in X_i\colon \varphi_i(q_1, x)=q_1\},\;\;\;T_i:=\{x\in X_i\colon \varphi_i(q_1, x)=q_2\}.
\end{equation}
Obviously, we have $Z_i\cup T_i=X_i$ and $Z_i\cap T_i=\emptyset$. Since $A$ is reversible, we also have
\begin{equation}\label{eee3}
Z_i=\{x\in X_i\colon \varphi_i(q_2, x)=q_2\},\;\;\;T_i=\{x\in X_i\colon \varphi_i(q_2, x)=q_1\}.
\end{equation}
Let $A^{-1}=(X, Q, \varphi', \psi')$ be the inverse automaton of $A$. For every $i\geq 1$ let us  denote
$$
\alpha_i:=\sigma_{i,q_1}\in Sym(X_i),\;\;\;\beta_i:=\sigma_{i, q_2}\in Sym(X_i).
$$
Then by the definition of $A^{-1}$ and by (\ref{e2}), we obtain for each $i\geq 1$ and each   $x\in X_i$:
\begin{equation}\label{e1}
\varphi'_i(q_1,x)=\varphi_i(q_1, \sigma^{-1}_{i,q_1}(x))=\varphi_i(q_1, \alpha_i^{-1}(x))=
\left\{
\begin{array}{ll}
q_1,&{\rm if}\;x\in \alpha_i(Z_i),\\
q_2,&{\rm if}\;x\in \alpha_i(T_i).
\end{array}
\right.
\end{equation}
Similarly, we obtain by (\ref{eee3})
\begin{equation}\label{e3}
\varphi'_i(q_2,x)=
\left\{
\begin{array}{ll}
q_2,&{\rm if}\;x\in \beta_i(Z_i),\\
q_1,&{\rm if}\;x\in \beta_i(T_i).
\end{array}
\right.
\end{equation}
Since the automaton $A^{-1}$ is reversible, we obtain by (\ref{e1})--(\ref{e3}):
\begin{equation}\label{e4}
\alpha_i(Z_i)=\beta_i(Z_i),\;\;\;\alpha_i(T_i)=\beta_i(T_i).
\end{equation}

Let $A^{(i)}=(X^{(i)}, Q, \varphi^{(i)}, \psi^{(i)})$ ($i\geq 0$) be the $i$-shift of  the automaton $A$  and let us denote:
$$
a_i:=A^{(i-1)}_{q_1},\;\;\;b_i:=A^{(i-1)}_{q_2},\;\;\;i\geq 1.
$$
For each $i\geq 1$, by the definition of the mappings $A^{(i-1)}_{q_1}$ and $A^{(i-1)}_{q_2}$ and by the reversibility of the automaton $A^{(i-1)}$, we obtain for all $x\in X_{i}$ and  $w\in (X^{(i)})^*$ the following recursions:
\begin{eqnarray}
a_i(xw)&=&\left\{
\begin{array}{ll}
\alpha_i(x)a_{i+1}(w),&{\rm if}\;x\in Z_i,\\
\alpha_i(x)b_{i+1}(w),&{\rm if}\;x\in T_i,
\end{array}
\right.\label{e5}\\
b_i(xw)&=&\left\{
\begin{array}{ll}
\beta_i(x)b_{i+1}(w),&{\rm if}\;x\in Z_i,\\
\beta_i(x)a_{i+1}(w),&{\rm if}\;x\in T_i.
\end{array}\label{e6}
\right.
\end{eqnarray}
By the formulae (\ref{e1})--(\ref{e3}), we can write:
\begin{eqnarray}
a_i^{-1}(xw)&=&\left\{
\begin{array}{ll}
\alpha_i^{-1}(x)a_{i+1}^{-1}(w),&{\rm if}\;x\in \alpha_i(Z_i),\\
\alpha_i^{-1}(x)b_{i+1}^{-1}(w),&{\rm if}\;x\in \alpha_i(T_i),
\end{array}
\right.\label{e7}\\
b_i^{-1}(xw)&=&\left\{
\begin{array}{ll}
\beta_i^{-1}(x)b_{i+1}^{-1}(w),&{\rm if}\;x\in \beta_i(Z_i),\\
\beta_i^{-1}(x)a_{i+1}^{-1}(w),&{\rm if}\;x\in \beta_i(T_i).
\end{array}\label{e8}
\right.
\end{eqnarray}
Let us denote $c_i:=a_i^{-1}b_i$ ($i\geq 1$). Then by (\ref{e6})--(\ref{e7}), we obtain for all $x\in X_i$ and  $w\in (X^{(i)})^*$:
\begin{equation}\label{e9}
c_i(xw)=a_i^{-1}b_i(xw)=\left\{
\begin{array}{ll}
a_i^{-1}(\beta_i(x)b_{i+1}(w)),&{\rm if}\;x\in Z_i,\\
a_i^{-1}(\beta_i(x)a_{i+1}(w)),&{\rm if}\;x\in T_i.
\end{array}
\right.
\end{equation}
By (\ref{e4}), the condition $x\in Z_i$ can be equivalently written as $\beta_i(x)\in\alpha_i(Z_i)$. Similarly, we have: $x\in T_i$ if and only if $\beta_i(x)\in \alpha_i(T_i)$. Hence, we obtain by (\ref{e7}) and (\ref{e9}):
$$
c_i(xw)=\left\{
\begin{array}{ll}
\alpha_i^{-1}\beta_i(x)a_{i+1}^{-1}b_{i+1}(w),&{\rm if}\;x\in Z_i,\\
\alpha_i^{-1}\beta_i(x)b_{i+1}^{-1}a_{i+1}(w),&{\rm if}\;x\in T_i.
\end{array}
\right.
$$
If we now denote $\gamma_i:=\alpha_i^{-1}\beta_i\in Sym(X_i)$, then we can write:
$$
c_i(xw)=\left\{\begin{array}{ll}
\gamma_i(x)c_{i+1}(w),&{\rm if}\;x\in Z_i,\\
\gamma_i(x)c_{i+1}^{-1}(w),&{\rm if}\;x\in T_i.
\end{array}
\right.
$$
By (\ref{e4}), we see that $\gamma_i(Z_i)=Z_i$ and $\gamma_i(T_i)=T_i$. Consequently, for any integer $N$ we can write:
$$
c_i^{N}(xw)=\left\{\begin{array}{ll}
\gamma_i^{N}(x)c_{i+1}^N(w),&{\rm if}\;x\in Z_i,\\
\gamma_i^{N}(x)c_{i+1}^{-N}(w),&{\rm if}\;x\in T_i.
\end{array}
\right.
$$
In particular,  for any word $w=x_1x_2\ldots x_t\in X^*$ and any integer $N$, we obtain:
\begin{equation}\label{s1s}
c_1^N(w)=\gamma_1^{N_1}(x_1)\gamma_2^{N_2}(x_2)\ldots\gamma_t^{N_t}(x_t),
\end{equation}
where $N_i\in \{-N, N\}$ for every $1\leq i\leq t$.  Assume now that the changing alphabet $X$ is bounded. Then there is a positive integer $N$ such that $\gamma_i^N=\gamma_i^{-N}=id_{X_i}$ for every $i\geq 1$ (suffice it to take $N=r!$, where $r:=\sup_{i\geq 1}|X_i|$). Hence the element $c_1=a_1^{-1}b_1=A_{q_1}^{-1}A_{q_2}$ is of finite order in the group $G(A)=\langle a_1, b_1\rangle$. Moreover, if the element $c_1$ is trivial, then taking $N=1$ in (\ref{s1s}), we obtain: $N_i\in\{-1,1\}$ for $1\leq i\leq t$, and hence $\gamma_i=id_{X_i}$ for every $i\geq 1$, which implies $\alpha_i=\beta_i$ for $i\geq 1$. Consequently, by (\ref{e5})--(\ref{e6}), we have in this case: $a_i=b_i$ for all $i\geq 1$. In particular, for every word $w=x_1x_2\ldots x_t\in X^*$, we obtain $a_1(w)=\alpha_1(x_1)\alpha_2(x_2)\ldots\alpha_t(x_t)$.
Since $X$ is bounded, the last equality implies  that the generator $a_1$ is of finite order in this case. Summarizing, if $X$ is bounded, then  $G(A)$ can not be a non-trivial torsion-free group, and hence $G(A)$ can not be isomorphic to $\mathcal{F}_2$. Conversely, if $X$ is unbounded, then the existence of a $2$-state bi-reversible automaton  over $X$ which  generates $\mathcal{F}_2$ directly follows from the existence of the corresponding diagonal construction -- see Theorem~\ref{t0} (see also Examples~\ref{ex1}--\ref{ex2} in Section~\ref{sec2}). \qed
\end{proof}

Let $X=(X_i)_{i\geq 1}$ be an arbitrary unbounded changing alphabet. By using the construction of the automaton $A$  from Example~\ref{ex2} (see Section~\ref{sec2}), we may provide an explicit  construction of a 2-state bi-reversible automaton over $X$ which generates $\mathcal{F}_2$. To this end, we choose  a strictly increasing sequence $\xi:=(\xi_i)_{i\geq 1}$ of natural numbers such that the changing alphabet  $Y=(Y_i)_{i\geq 1}$ with  $Y_i:=X_{\xi_i}$  is unbounded and the sequence $(|Y_{i}|)_{i\geq 1}$ is non-decreasing and satisfies:  $|Y_i|\geq 2$ for every $i\geq 1$. Let  $A=(Y, \{q_1, q_2\}, \varphi, \psi)$ be the automaton from Example~\ref{ex2} and let $B=(X, \{q_1, q_2\}, \varphi', \psi')$ be an automaton which arises from $A$ as follows (below we denote ${\rm Im}(\xi):=\{\xi_i\colon i\geq 1\}$ and if $i\in {\rm Im}(\xi)$, then by $i'$ we denote a unique  $j\geq 1$ such  that $\xi_j=i$):
$$
\varphi'_i(q,x):=\left\{
\begin{array}{ll}
\varphi_{i'}(q,x),&{\rm if}\;i\in {\rm Im}(\xi),\\
q, &{\rm if}\;i\notin {\rm Im}(\xi),
\end{array}
\right.\psi'_i(q,x):=\left\{
\begin{array}{ll}
\psi_{i'}(q,x),&{\rm if}\;i\in {\rm Im}(\xi),\\
x, &{\rm if}\;i\notin {\rm Im}(\xi)
\end{array}
\right.
$$
for all $q\in \{q_1, q_2\}$, $i\geq 1$ and $x\in X_i$. Since the automaton $A$ is bi-reversible, the automaton $B$ is also bi-reversible.

\begin{theorem}
$G(B)\simeq \mathcal{F}_2$.
\end{theorem}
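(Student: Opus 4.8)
The plan is to prove that $B_{q_1}$ and $B_{q_2}$ generate $G(B)$ \emph{freely}, by transporting the freeness of $G(A)\simeq\mathcal F_2$ (Example~\ref{ex2}, whose hypotheses are met since $Y$ is unbounded, $(|Y_i|)_{i\ge 1}$ is non-decreasing and $|Y_i|\ge 2$) across a projection that intertwines the two actions. Since $G(A)$ is freely generated by $A_{q_1},A_{q_2}$, the assignment $A_{q_1}\mapsto B_{q_1}$, $A_{q_2}\mapsto B_{q_2}$ extends to a surjective homomorphism $\phi\colon G(A)\to G(B)$; the whole content is to show $\phi$ is injective, i.e.\ that $W(B_{q_1},B_{q_2})\neq\mathrm{id}$ on $X^*$ for every nontrivial reduced word $W$ in two symbols.

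First I would introduce the projection $P\colon X^*\to Y^*$ that extracts the letters sitting at the positions $\xi_1,\xi_2,\dots$: for $w=x_1\cdots x_t$ put $P(w):=x_{\xi_1}x_{\xi_2}\cdots x_{\xi_m}$, where $m$ is the number of indices $j$ with $\xi_j\le t$. This is well defined because $x_{\xi_j}\in X_{\xi_j}=Y_j$, and it is clearly surjective, since any $u\in Y^*$ equals $P$ of a lift obtained by placing the letters of $u$ at the positions $\xi_1,\dots,\xi_m$ and filling the remaining coordinates arbitrarily.

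The heart of the argument is the intertwining identity
$$P\bigl(B_q(w)\bigr)=A_q\bigl(P(w)\bigr),\qquad P\bigl(B_q^{-1}(w)\bigr)=A_q^{-1}\bigl(P(w)\bigr)$$
for every $q\in\{q_1,q_2\}$ and $w\in X^*$. To establish it I would track the state trajectory of $B$ while reading $w$: by definition of $B$, at every position $i\notin\mathrm{Im}(\xi)$ the state is left unchanged and the letter passes through untouched, whereas at a position $\xi_j$ the transition and output functions coincide with those of $A$ at its $j$-th step. Consequently the sequence of states visited at the positions $\xi_1,\xi_2,\dots$ is exactly the state sequence $A$ would run through while reading the extracted subword $P(w)$, so the letters $B$ outputs at those positions spell precisely $A_q(P(w))$. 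The same holds for the inverses, since $B^{-1}$ is padded from $A^{-1}$ in exactly the same manner: the inverse of the identity permutation is again the identity, and the transition function of $B^{-1}$ is again $q\mapsto q$ off $\mathrm{Im}(\xi)$.

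Finally I would compose: the intertwining identity holds for the generators and their inverses as maps $X^*\to Y^*$, hence $P\circ g=\bar g\circ P$ for every $g\in G(B)$, where $\bar g\in G(A)$ is the element represented by the same word in the generators. If $g=W(B_{q_1},B_{q_2})=\mathrm{id}$, then $\bar g\circ P=P$, and surjectivity of $P$ forces $\bar g=W(A_{q_1},A_{q_2})=\mathrm{id}$ on $Y^*$; since $G(A)$ is free on $A_{q_1},A_{q_2}$ this means $W$ is trivial. Thus $\ker\phi=1$, so $\phi$ is an isomorphism and $G(B)\simeq\mathcal F_2$. I expect the only real obstacle to be the careful bookkeeping of the state trajectory in the intertwining identity, namely making fully precise that the off-$\mathrm{Im}(\xi)$ coordinates are genuinely inert (fixed letter and fixed state) throughout, so that the $\xi_j$-positions reproduce the $A$-dynamics verbatim; everything after that is formal.
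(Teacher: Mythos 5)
Your proof is correct and takes essentially the same approach as the paper: your projection $P$ is exactly the paper's extraction map $w\mapsto\widetilde{w}$, your intertwining identity is the paper's key implication that the filler blocks are inert while the $\xi_j$-positions reproduce the $A$-dynamics, and both arguments close with the surjectivity of this map. The only cosmetic difference is that you get one direction of the word-problem equivalence from the universal property of the free group (well-definedness of $\phi$), whereas the paper derives both directions directly from the intertwining statement.
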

\begin{proof}
Let us choose an arbitrary word $w\in X^*$. Then we can uniquely choose the number $l\geq 0$ and the letters $y_i\in Y_i$ ($1\leq i\leq l$) such that $w=v_1y_1\ldots v_ly_lv_{l+1}$, where the sequences $v_i$ ($1\leq i\leq l$) satisfy: $|v_i|=\xi_i-\xi_{i-1}-1$ (we assume $\xi_0=0$). By  the definition of the automaton $B$ and the definition of the inverse automaton, we see that the actions of $B$ (resp. of $B^{-1}$) on the sequences $v_i$ ($1\leq i\leq l+1$) are trivial and $B$ (resp. $B^{-1}$) does not change its state when reading each of these sequences, whereas the actions of $B$ (resp. of $B^{-1}$) on the letters $y_i$ ($1\leq i\leq l$) coincide with the corresponding actions of the automaton $A$ (resp. of the automaton $A^{-1}$). Thus, if we denote $\widetilde{w}:=y_1\ldots y_l$ and if for some $q\in Q$ and $\eta\in\{-1,1\}$ we have $A_q^\eta(\widetilde{w})=y_1'\ldots y'_l$, where  $y'_i\in Y_i$ ($1\leq i\leq l$), then $B_q^\eta(w)=v_1y_1'\ldots v_ly_l'v_{l+1}$. Consequently, for any sequences $(q_1, q_2, \ldots, q_m)\in Q^m$ and $(\eta_1, \eta_2, \ldots, \eta_m)\in\{-1,1\}^m$ ($m\geq 1$)  and for any letters $y'_i\in Y_i$ ($1\leq i\leq l$) we obtain the following implication:
$$
A_{q_1}^{\eta_1}A_{q_2}^{\eta_2}\ldots A_{q_m}^{\eta_m}(\widetilde{w})=y_1'y_2'\ldots y'_l\Rightarrow B_{q_1}^{\eta_1}B_{q_2}^{\eta_2}\ldots B_{q_m}^{\eta_m}(w)=v_1y_1'\ldots v_ly_l'v_{l+1}.
$$
Thus,  since the mapping $X^*\ni w\mapsto \widetilde{w}\in Y^*$ is surjective, we obtain:
$$
A_{q_1}^{\eta_1}A_{q_2}^{\eta_2}\ldots A_{q_m}^{\eta_m}=id_{Y^*}\Leftrightarrow B_{q_1}^{\eta_1}B_{q_2}^{\eta_2}\ldots B_{q_m}^{\eta_m}=id_{X^*}
$$
Hence, $B_q\mapsto A_q$ ($q\in Q$) induces the isomorphisms $G(B)\simeq G(A)\simeq \mathcal{F}_2$.\qed
\end{proof}

\section{The classification  in the case of the sequence of binary alphabets}

In this section, we completely characterize the class $BIR_{2,2}$ of all groups generated by a 2-state bi-reversible automaton over the sequence of binary alphabets. We show that $BIR_{2,2}$ consists exactly of five  groups,  and  only three of them can be generated by a 2-state bi-reversible Mealy automaton over the binary alphabet.

\begin{theorem}
The class $BIR_{2,2}$ of all groups generated by a 2-state bi-reversible automaton over the sequence of binary alphabets consists of five groups: the trivial group $\{id\}$, the cyclic group of order two $\mathbb{Z}_2$, the Klein group $\mathbb{Z}_2\times\mathbb{Z}_2$,  the cyclic group of order four $\mathbb{Z}_4$ and the direct product $\mathbb{Z}_2\times\mathbb{Z}_4$. Only the first three groups belong to the subclass $BIRM_{2,2}\subseteq BIR_{2,2}$ of  groups  generated by a 2-state bi-reversible Mealy automaton over the binary alphabet.
\end{theorem}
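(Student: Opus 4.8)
The plan is to read everything off the recursive structure already set up in the proof of Theorem~\ref{t1}. I keep its notation: for the $i$-th shift put $a_i:=A^{(i-1)}_{q_1}$, $b_i:=A^{(i-1)}_{q_2}$, $c_i:=a_i^{-1}b_i$ and $\gamma_i:=\alpha_i^{-1}\beta_i$, together with the partition $X_i=Z_i\cup T_i$ from (\ref{e2}). Since $|X_i|=2$ we have $Sym(X_i)=\{id,s_i\}\cong\mathbb{Z}_2$, so each of $\alpha_i,\beta_i,\gamma_i$ is either trivial or the transposition $s_i$, and the bi-reversibility relation (\ref{e4}) forces $\alpha_i=\beta_i$ (hence $\gamma_i=id$) whenever both $Z_i$ and $T_i$ are singletons. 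Thus at every level exactly one of four elementary blocks occurs: (I) $Z_i=X_i$; (II) $T_i=X_i$; (III) $Z_i,T_i$ singletons with $\alpha_i=\beta_i=id$; (IV) $Z_i,T_i$ singletons with $\alpha_i=\beta_i=s_i$. The whole automaton is just a sequence of such blocks, and the classification will follow from analysing how $c_i$, $d_i:=a_i^2$ and the commutator data propagate through a single block.

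First I would record the cheap facts. From (\ref{s1s}) with $N=2$ and $\gamma_i^2=id$ one gets $c_1^2=id$, so $c_1$ has order at most $2$. The heart is then to prove \emph{simultaneously}, by induction on word length, the two statements $P_i$: ``$a_ib_i=b_ia_i$'' and $Q_i$: ``$a_i^2=b_i^2$'' for all $i$. Computing the first-letter decompositions of $a_ib_i$, $b_ia_i$, $a_i^2$, $b_i^2$ from the recursions (\ref{e5})--(\ref{e6}) in each block, one checks that on words of length $\le n$ the equality $P_i$ reduces to $P_{i+1}$ in blocks (I), (II), (III) and to $Q_{i+1}$ in block (IV), while $Q_i$ reduces to $Q_{i+1}$ in blocks (I), (II), (III) and to $P_{i+1}$ in block (IV), always on words of length $\le n-1$. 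As both statements are trivial on the empty word, the coupled induction yields all $P_i$ and all $Q_i$. In particular $G(A)=\langle a_1,b_1\rangle$ is abelian and $a_1^2=b_1^2$.

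Next I would bound the order of $a_1$. Using $P_i$ and $Q_i$, the same block computation shows that $d_i=a_i^2$ always fixes the first letter and equals $(d_{i+1},d_{i+1})$ in blocks (I), (II), (III) and $(a_{i+1}b_{i+1},a_{i+1}b_{i+1})$ in block (IV); since $(a_{i+1}b_{i+1})^2=a_{i+1}^2b_{i+1}^2=d_{i+1}^2$, in every case $d_i^2=(d_{i+1}^2,d_{i+1}^2)$. Reading this down a word of finite length $t$ forces $d_1^2=id$, i.e. $a_1^4=id$. Hence $G(A)=\langle a_1,c_1\rangle$ is abelian, generated by an element of order dividing $4$ and one of order dividing $2$, so it is a quotient of $\mathbb{Z}_4\times\mathbb{Z}_2$. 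Up to isomorphism the quotients of $\mathbb{Z}_4\times\mathbb{Z}_2$ are exactly $\{id\}$, $\mathbb{Z}_2$, $\mathbb{Z}_2\times\mathbb{Z}_2$, $\mathbb{Z}_4$ and $\mathbb{Z}_2\times\mathbb{Z}_4$, which is the list in the statement; this is the upper bound. The realisations of all five groups are then produced by explicit block sequences, the order of $a_1$ (i.e. whether $d_1=id$) and the position of $c_1$ relative to $\langle a_1\rangle$ being read off from the propagation formulas $c_1=\gamma_1\cdot(c_2,c_2)$ and the one for $d_1$ above; three choices are constant (Mealy) and the two giving $\mathbb{Z}_4$ and $\mathbb{Z}_2\times\mathbb{Z}_4$ use genuinely varying blocks.

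Finally, for the Mealy subclass I would sharpen the bound to $a_1^2=id$. When the data are constant, blocks (I), (II), (III) give $d_1=(d_2,d_2)$ with $d_2=d_1$, whence $d_1=id$; and in the constant block (IV) one shows $a_1=b_1$ by the same length induction, so again $d_1=(d_2,d_2)=id$. Thus a Mealy $G(A)$ is a quotient of $\mathbb{Z}_2\times\mathbb{Z}_2$, leaving only the first three groups, each realised by a constant automaton. I expect the delicate point to be precisely the coupled induction for $P_i$ and $Q_i$: commutativity at one level does not reduce to commutativity at the next but, in block (IV), to equality of squares, so the two statements must be carried together. A secondary subtlety is that $\mathbb{Z}_2\times\mathbb{Z}_4$ cannot be obtained self-similarly -- any constant design collapses, as the Mealy analysis shows -- and so must be assembled from several distinct blocks.
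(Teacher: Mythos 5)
Your containment argument (every group in $BIR_{2,2}$ is a quotient of $\mathbb{Z}_2\times\mathbb{Z}_4$) is correct and is essentially the paper's own proof: your four blocks (I)--(IV), with relation (\ref{e4}) forcing $\alpha_i=\beta_i$ whenever $Z_i$ and $T_i$ are singletons, are exactly the three cases in the proof of Lemma~\ref{lem1} (the paper absorbs the (I)/(II) distinction and the two possible crossings into the choice $(c,d)\in\{(a_{i+1},b_{i+1}),(b_{i+1},a_{i+1})\}$), and your coupled induction on $P_i$, $Q_i$ -- with the swap, in block (IV), of commutativity into equality of squares and vice versa -- is precisely the simultaneous induction of Lemma~\ref{lem1}. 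The differences are cosmetic or mildly advantageous: you extract $a_1^4=id$ afterwards from the propagation $d_i^2=(d_{i+1}^2,d_{i+1}^2)$ instead of carrying $a_i^4=b_i^4=id$ inside the induction (the paper needs the computation $C^2(w)=CD(w)=cd^2c(w)=c^4(w)=w$ there), and your Mealy analysis (constant blocks force $d_1=d_2=id$; constant block (IV) forces $a_1=b_1$) is a self-contained replacement for the paper's citation of the known classification of $2$-state binary Mealy automata.

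The genuine gap is the realization half. The theorem asserts that $\mathbb{Z}_4$ and $\mathbb{Z}_2\times\mathbb{Z}_4$ actually belong to $BIR_{2,2}$, and for this your proposal offers only the sentence that realizations are ``produced by explicit block sequences\ldots read off from the propagation formulas''. That is not a proof. The propagation formulas ($c_1=\gamma_1(c_2,c_2)$ and the recursion for $d_i$) only bound orders from above; to realize a given group you must exhibit a concrete block sequence and then prove a \emph{lower} bound on $|G(A)|$, i.e.\ rule out that your automaton generates a proper quotient, and this separation of elements is where the real work lies. The paper does it explicitly: it takes block (IV) at all odd levels and block (I) with labels $(\tau,id)$ at all even levels, derives the recursions (\ref{ee1})--(\ref{ee2}), shows $a_{2i}b_{2i}\neq id$, hence $a_1^2\neq id$ and $a_1b_1\neq id$, and then argues that $S=\{id,a_1^2,a_1b_1,a_1^3b_1\}$ consists of four distinct elements, each fixing the first letter, while $a_1(0)=1$, so $|G(A)|\geq 5$ and therefore $G(A)\simeq\mathbb{Z}_2\times\mathbb{Z}_4$; for $\mathbb{Z}_4$ it trivializes all labelings from level $3$ on and checks that $a_1$ has order four with $b_1=a_1^{-1}$. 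Until you specify such sequences and carry out the corresponding distinctness checks, you have established only the inclusion $BIR_{2,2}\subseteq\{\{id\},\mathbb{Z}_2,\mathbb{Z}_2\times\mathbb{Z}_2,\mathbb{Z}_4,\mathbb{Z}_2\times\mathbb{Z}_4\}$ together with membership of the three Mealy-realizable groups, not the stated equality.
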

\begin{proof}
The second part directly follows from the well-known classification of the groups generated by a 2-state Mealy automaton over the binary alphabet (the corresponding constructions are described in Section 4.1 in~\cite{9} by the popular language of wreath recursions). For the first part, let us consider an arbitrary 2-state bi-reversible automaton $A=(X, \{q_1, q_2\}, \varphi, \psi)$ over the changing alphabet $X=(X_i)_{i\geq 1}$ in which $X_i:=\{0,1\}$ for every $i\geq 1$. For each $i\geq 0$ let $A^{(i)}=(X, Q, \varphi^{(i)}, \psi^{(i)})$ be the $i$-shift of $A$. Let us denote
$$
a_i:=A^{(i-1)}_{q_1},\;\;\;b_i:=A^{(i-1)}_{q_2},\;\;\;i\geq 1.
$$

\begin{lemma}\label{lem1}
For every $i\geq 1$ and every word $w\in X^*$ the following equalities hold:
$$
a_ib_i(w)=b_ia_i(w),\;\;\; a_i^2(w)=b_i^2(w),\;\;\;a_i^4(w)=b_i^4(w)=w.
$$
\end{lemma}
\begin{proof}[of Lemma~\ref{lem1}]
We use induction on the length of $w$. Let us assume that the claim holds for all $i\geq 1$ and for all words $w\in X^*$ with the length $|w|=t$ for some fixed $t\geq 0$. Let us fix $i\geq 1$. By the bi-reversibility of $A$, we see that  there are $\pi_1, \pi_2\in Sym(\{0,1\})$ and $c, d\in\{a_{i+1}, b_{i+1}\}$ such that $(c, d)\in\{(a_{i+1}, b_{i+1}), (b_{i+1},a_{i+1})\}$ and for every $x\in \{0,1\}$ and $w\in X^*$ one of the following three cases holds:
$$
a_i(xw)=\pi_1(x)c(w),\;\;\;b_i(xw)=\pi_2(x)d(w),
$$
or
$$
a_i(xw)=\left\{
\begin{array}{ll}
0c(w),&{\rm if}\;x=0,\\
1d(w),&{\rm if}\;x=1,\\
\end{array}
\right.\;\;\;b_i(xw)=\left\{
\begin{array}{ll}
0d(w),&{\rm if}\;x=0,\\
1c(w),&{\rm if}\;x=1,\\
\end{array}
\right.
$$
or
$$
a_i(xw)=\left\{
\begin{array}{ll}
1c(w),&{\rm if}\;x=0,\\
0d(w),&{\rm if}\;x=1,\\
\end{array}
\right.\;\;\;b_i(xw)=\left\{
\begin{array}{ll}
1d(w),&{\rm if}\;x=0,\\
0c(w),&{\rm if}\;x=1.\\
\end{array}
\right.
$$
If we denote $C:= cd$ and $D:=dc$, then we obtain in the first case:
\begin{eqnarray*}
a_ib_i(xw)=\pi_1\pi_2(x)C(w),\;\;\;b_ia_i(xw)=\pi_1\pi_2(x)D(w),\\
a_i^2(xw)=xc^2(w),\;\;\;b_i^2(xw)=xd^2(w),\\
a_i^4(xw)=xc^4(w),\;\;\;b_i^4(xw)=xd^4(w).
\end{eqnarray*}
In the second case, we obtain
\begin{eqnarray*}
a_ib_i(xw)=\left\{
\begin{array}{ll}
0C(w),&{\rm if}\;x=0,\\
1D(w),&{\rm if}\;x=1,\\
\end{array}
\right.\;
b_ia_i(xw)=\left\{
\begin{array}{ll}
0D(w),&{\rm if}\;x=0,\\
1C(w),&{\rm if}\;x=1,\\
\end{array}
\right.\\
a_i^2(xw)=\left\{
\begin{array}{ll}
0c^2(w),&{\rm if}\;x=0,\\
1d^2(w),&{\rm if}\;x=1,\\
\end{array}
\right.\;\;\;b_i^2(xw)=\left\{
\begin{array}{ll}
0d^2(w),&{\rm if}\;x=0,\\
1c^2(w),&{\rm if}\;x=1,\\
\end{array}
\right.\\
a_i^4(xw)=\left\{
\begin{array}{ll}
0c^4(w),&{\rm if}\;x=0,\\
1d^4(w),&{\rm if}\;x=1,\\
\end{array}
\right.\;\;\;b_i^4(xw)=\left\{
\begin{array}{ll}
0d^4(w),&{\rm if}\;x=0,\\
1c^4(w),&{\rm if}\;x=1.\\
\end{array}
\right.\\
\end{eqnarray*}
In the third case, we obtain:
\begin{eqnarray*}
a_ib_i(xw)=\left\{
\begin{array}{ll}
0d^2(w),&{\rm if}\;x=0,\\
1c^2(w),&{\rm if}\;x=1,\\
\end{array}
\right.\;
b_ia_i(xw)=\left\{
\begin{array}{ll}
0c^2(w),&{\rm if}\;x=0,\\
1d^2(w),&{\rm if}\;x=1,\\
\end{array}
\right.\\
a_i^2(xw)=\left\{
\begin{array}{ll}
0D(w),&{\rm if}\;x=0,\\
1C(w),&{\rm if}\;x=1,\\
\end{array}
\right.\;\;\;b_i^2(xw)=\left\{
\begin{array}{ll}
0C(w),&{\rm if}\;x=0,\\
1D(w),&{\rm if}\;x=1,\\
\end{array}
\right.\\
a_i^4(xw)=\left\{
\begin{array}{ll}
0D^2(w),&{\rm if}\;x=0,\\
1C^2(w),&{\rm if}\;x=1,\\
\end{array}
\right.\;\;\;b_i^4(xw)=\left\{
\begin{array}{ll}
0C^2(w),&{\rm if}\;x=0,\\
1D^2(w),&{\rm if}\;x=1.\\
\end{array}
\right.
\end{eqnarray*}
By the inductive assumption, since $(c, d)\in\{(a_{i+1}, b_{i+1}), (b_{i+1},a_{i+1})\}$, we obtain for every word $w\in X^*$ of length $t$:
$C(w)=D(w)$, $c^2(w)=d^2(w)$, $c^4(w)=d^4(w)=w$. Consequently, since $|c(w)|=|w|=t$, we obtain $C^2(w)=CD(w)=cddc(w)=cd^2(c(w))=cc^2(c(w))=c^4(w)=w$. Similarly $D^2(w)=w$. Thus in each case, we have
$$
a_ib_i(xw)=b_ia_i(xw),\;\;\;a_i^2(xw)=b_i^2(xw),\;\;\;a_i^4(xw)=b_i^4(xw)=xw.
$$
An inductive argument finishes the proof of Lemma~\ref{lem1}.\qed
\end{proof}

By Lemma~\ref{lem1}, we obtain that the group $G(A)=\langle a_1, b_1\rangle$ is abelian and the generators $a_1$, $b_1$ satisfy:  $a_1^2=b_1^2$ and $a_1^4=b_1^4=id$. Since the direct product $\mathbb{Z}_2\times \mathbb{Z}_4$ has the presentation $\langle a, b\colon a^4=b^4=1, a^2=b^2, ab=ba\rangle$, the group $G(A)$ is a quotient of this product. There are only five  quotients of $\mathbb{Z}_2\times \mathbb{Z}_4$: the trivial group, $\mathbb{Z}_2$, $\mathbb{Z}_2\times\mathbb{Z}_2$, $\mathbb{Z}_4$ and $\mathbb{Z}_2\times \mathbb{Z}_4$. As we mentioned above,  the first three groups were proved to be  generated by a suitable Mealy automaton. We now construct the corresponding automata for the groups $\mathbb{Z}_2\times \mathbb{Z}_4$ and $\mathbb{Z}_4$.

Let $A=(X, \{q_1, q_2\}, \varphi, \psi)$ be an automaton in which  the transition and the  output functions are defined for all $i\geq 1$, $q\in Q$, $x\in \{0,1\}$ as follows:
$$
\varphi_i(q,x):=\left\{
\begin{array}{ll}
\overline{q},&{\rm if}\;x=1\;{\rm and}\;2\nmid i,\\
q,&{\rm otherwise},
\end{array}
\right.\;\psi_i(q,x):=\left\{
\begin{array}{ll}
x,&{\rm if}\;q=q_2\;{\rm and}\;2\mid i,\\
\tau(x),&{\rm otherwise},
\end{array}
\right.
$$
where  $\tau\in Sym(\{0,1\})$ is a transposition, $\overline{q}_1:=q_2$, $\overline{q}_2:=q_1$. It can be easily verified that the automaton $A$ is bi-reversible and the mappings $a_i:=A^{(i-1)}_{q_1}$, $b_i:=A^{(i-1)}_{q_2}$ ($i\geq 1$) satisfy for all $x\in\{0,1\}$ and $w\in \{0,1\}^*$ the following recursions: if $i$ is odd, then
\begin{equation}\label{ee1}
a_{i}(xw)=\left\{
\begin{array}{l}
1a_{i+1}(w),\;{\rm if}\;x=0,\\
0b_{i+1}(w),\;{\rm if}\;x=1,\\
\end{array}
\right.\;
b_{i}(xw)=\left\{
\begin{array}{l}
1b_{i+1}(w),\;{\rm if}\;x=0,\\
0a_{i+1}(w),\;{\rm if}\;x=1,
\end{array}
\right.
\end{equation}
and if $i$ is even, then
\begin{equation}\label{ee2}
a_{i}(xw)=\tau(x)a_{i+1}(w),\;\;\;b_{i}(xw)=xb_{i+1}(w).
\end{equation}
By (\ref{ee2}), we have $a_{2i}\neq b_{2i}$ ($i\geq 1$), and hence, by (\ref{ee1}), we obtain:  $a_i\neq b_i$ for every $i\geq 1$. By (\ref{ee1})--(\ref{ee2}) and by  the relations $a_ib_i=b_ia_i$, $a_i^2=b_i^2$ ($i\geq 1$), we  obtain:
\begin{eqnarray*}
a_{2i-1}b_{2i-1}(xw)=xa_{2i}^2(w),\;\;\;a_{2i}b_{2i}(xw)=\tau(x)a_{2i+1}b_{2i+1}(w),\\
a_{2i}^2(xw)=xa_{2i+1}^2(w),\;\;\;\;\;a_{2i-1}^2(xw)=xa_{2i}b_{2i}(w)
\end{eqnarray*}
for all $i\geq 1$, $x\in \{0,1\}$ and $w\in \{0,1\}^*$. In particular $a_4b_4\neq id$ and $a_2b_2\neq id$. Hence $a_1^2\neq id$ and $a_3^2\neq id$, which implies $a_2^2\neq id$. Consequently $a_1b_1\neq id$. Next, since $a_1\neq b_1$ and $a_1^4=id$, we  obtain:
$$
a_1^2\neq a_1b_1,\;a_1^2\neq a_1^3b_1,\;a_1b_1\neq a_1^3b_1,\;a_1^3b_1\neq id.
$$
Thus $S:=\{id, a_1^2, a_1b_1, a_1^3b_1\}$ is a four-element subset of the group $G(A)$. Since each element in $S$  fixes the first letter of any word and $a_1(0)=1$,  we have $a_1\notin S$, and hence the group $G(A)=\langle a_1, \ b_1\rangle$ contains  at least five elements. Thus it must be $G(A)\simeq \mathbb{Z}_2\times \mathbb{Z}_4$.

Finally, let $A'=(X, \{q_1, q_2\}, \varphi, \psi')$ be an automaton which arises from the  automaton $A$ by replacing the labelings $\sigma_{q,i}\colon x\mapsto \psi_i(q,x)$ with $q\in \{q_1, q_2\}$ and $i\geq 3$ by the trivial permutations. Obviously, the automaton $A'$ is bi-reversible. If we now denote $a_i:=A'^{(i-1)}_{q_1}$ and $b_i:=A'^{(i-1)}_{q_2}$ ($i\geq 1$), then we have: $a_i=b_i=id$ for $i\geq 3$. For all $x\in\{0,1\}$ and $w\in\{0,1\}^*$ we also have:
\begin{equation}\label{ee3}
a_2(xw)=\tau(x)a_3(w)=\tau(x)w,\;\;\;b_2(xw)=xb_3(w)=xw
\end{equation}
and
\begin{equation}\label{ee4}
a_{1}(xw)=\left\{
\begin{array}{ll}
1a_{2}(w),&{\rm if}\;x=0,\\
0b_{2}(w),&{\rm if}\;x=1,\\
\end{array}
\right.\;
b_{1}(xw)=\left\{
\begin{array}{ll}
1b_{2}(w),&{\rm if}\;x=0,\\
0a_{2}(w),&{\rm if}\;x=1.
\end{array}
\right.
\end{equation}
By (\ref{ee3}), we have:  $a_2\neq id$, $b_2=id$, and hence, by (\ref{ee4}), we obtain: $a_1^2(xw)=xa_2(w)$ for $x\in\{0,1\}$, $w\in\{0,1\}^*$. Thus $a_1^2\neq id$, which means that  $a_1$ is of order four in the group $G(A')=\langle a_1, b_1\rangle$. Since $a_2^2=b_2^2=id$, we obtain by (\ref{ee4}): $a_1b_1(xw)=xw$ for all $x\in\{0,1\}$ and $w\in\{0,1\}^2$. Thus $b_1=a_1^{-1}$ and $G(A')=\{id, a_1, a_1^2, a_1^3\}$. Consequently $G(A')\simeq  \mathbb{Z}_4$.
\qed
\end{proof}

The situation  changes if we consider the wider class $IR_{2,2}$ consisting  of  groups generated by a 2-state invertible-reversible automaton over the sequence of binary alphabets, as well as the class $BIR_{2,3}$ of  groups  generated by a 2-state bi-reversible automaton over the sequence of ternary alphabets $X_i=\{0,1,2\}$ ($i\geq 1$). For the next result, we use  some well-known  constructions of Mealy automata  to show that both these classes  contain infinitely many pairwise non-isomorphic finite groups.

\begin{theorem}
The class $IR_{2,2}$ of  groups generated by a 2-state invertible-reversible automaton over the sequence of binary alphabets, as well as the class $BIR_{2,3}$ of  groups  generated by a 2-state bi-reversible automaton over the sequence of ternary alphabets contains infinitely many pairwise non-isomorphic finite groups.
\end{theorem}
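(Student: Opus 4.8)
The plan is to reduce both assertions to a single \emph{truncation principle}: if a $2$-state invertible--reversible (resp. bi-reversible) Mealy automaton $M$ over a fixed finite alphabet $X_0$ generates an \emph{infinite} group, then its finite truncations supply infinitely many pairwise non-isomorphic finite groups in the corresponding class. For $n\ge 1$ let $M_n$ be the automaton over the constant changing alphabet $(X_0)_{i\ge 1}$ which agrees with $M$ on the first $n$ transitions and is diagonal with identity labellings on every transition $i>n$. Each $M_n$ is invertible--reversible (resp. bi-reversible), since both properties are imposed transition-by-transition and the trivial tail satisfies them. Since $M_n$ acts trivially below level $n$, the group $G(M_n)$ factors through the (finite) automorphism group of the truncated tree, so it is finite; more precisely the restriction-to-level-$n$ map is a surjection $G(M)\twoheadrightarrow G(M_n)$ whose kernel is the level-$n$ stabiliser, whence $|G(M_n)|=[\,G(M):\mathrm{St}_{G(M)}(n)\,]$. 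As $G(M)\le Aut(X_0^*)$ acts faithfully, $\bigcap_n \mathrm{St}_{G(M)}(n)=1$; the indices are finite and non-decreasing, and if they were bounded they would eventually equal $[G(M):1]=\infty$, a contradiction. Hence $|G(M_n)|\to\infty$, and since there are only finitely many groups of each finite order, the family $\{G(M_n)\}$ realises infinitely many isomorphism types.

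It then remains to exhibit, in each class, one base automaton with infinite group. For $IR_{2,2}$ I would take the $2$-state automaton $M$ over $\{0,1\}$ in which letter $0$ fixes both states, letter $1$ interchanges them, and the labellings are $\psi(q_1,\cdot)=\tau$ and $\psi(q_2,\cdot)=\mathrm{id}$, where $\tau$ is the transposition. This is manifestly invertible (both labellings are permutations) and reversible (each letter induces a permutation of the state set). Writing $a=A_{q_1}$, $b=A_{q_2}$ one gets the self-similar recursions $a(0w)=1\,a(w)$, $a(1w)=0\,b(w)$, $b(0w)=0\,b(w)$, $b(1w)=1\,a(w)$, together with $a^2(xw)$ fixing the first letter and acting by $ba$ on the $0$-subtree and by $ab$ on the $1$-subtree. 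Computing the permutations induced by $a$ on the first levels gives cycle lengths $2,4,4,8,\dots$, so $a$ has infinite order and $G(M)$ is infinite.

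For $BIR_{2,3}$ the bi-reversibility is the delicate constraint, the preceding theorem having shown that the binary bi-reversible world collapses to only five finite groups; thus the ternary alphabet must be used essentially. Here I would take the automaton of Example~\ref{ex2} specialised to the constant alphabet $Y_i=\{0,1,2\}$ with $\pi_i=(0\,1\,2)$, $\tau_i=(0\,1)$ and $x_i^0=0$, which Example~\ref{ex2} already displays, together with its inverse, as bi-reversible. Its group is infinite: with $a=A_{q_1}$, $b=A_{q_2}$ the recursions give $a(0w)=1\,b(w)$, $a(1w)=2\,a(w)$, $a(2w)=0\,a(w)$, and $a$ induces on levels $1,2,\dots$ permutations of orders $3,6,\dots$, so $a$ has unbounded order. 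Truncating this $M'$ as in the first paragraph produces the required infinite family in $BIR_{2,3}$.

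The genuine work — and the step I expect to be the main obstacle — is proving that these two base automata really generate infinite groups, i.e. controlling the order growth of $a$ \emph{through all levels} rather than the first few. The self-similar recursions reduce this to showing that the sections governing $a^{2^k}$ never simultaneously collapse to the identity; the clean way to settle it is an induction producing a strictly increasing invariant, for instance showing that along a distinguished branch the order of the level-$(k+1)$ restriction strictly exceeds that of the level-$k$ restriction. Everything else — the level-wise verification that the truncations remain (bi-)reversible, the finiteness of each $G(M_n)$, and the counting argument that unbounded orders force infinitely many isomorphism types — is routine.
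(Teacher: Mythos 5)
Your first paragraph — the truncation principle — is exactly the paper's own first step: the paper introduces the $k$-th restriction $A|^k$ (equal to $A$ on the first $k$ transitions, trivial afterwards), notes that invertibility, reversibility and bi-reversibility pass to $A|^k$, that each $G(A|^k)$ is finite, and that $G(A)$ embeds into $\prod_{k\geq 0}G(A|^k)$. The only difference is cosmetic: the paper concludes by observing that finitely many isomorphism types would make the product locally finite and hence $G(A)$ finite, while you let the indices $[\,G(M):\mathrm{St}_{G(M)}(n)\,]$ tend to infinity and use that there are only finitely many groups of each order; both arguments are correct.

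The gap is in the second half, and you flag it yourself: you never prove that your two base automata generate infinite groups. Computing the action on two or three levels and writing ``$2,4,4,8,\dots$'' or ``$3,6,\dots$'' is not a proof, and the repair you sketch — an induction showing that the order of the level-$(k+1)$ restriction \emph{strictly} exceeds that of the level-$k$ restriction — fails as stated: your own data $2,4,4,8$ for the binary automaton is not strictly increasing, and for the ternary automaton the orders of $a$ on levels $1,2,3$ are $3,6,6$ (one checks that $a^6$ already acts trivially on the second level), so there is no level-to-level strictly increasing invariant of this kind. Unboundedness of these orders needs a genuinely different argument, and the proposal does not contain one. What saves your plan is that both of your automata are, up to relabelling, precisely the known automata the paper invokes, so the gap is closable by citation rather than computation. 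Swapping the letters $0\leftrightarrow 1$ turns your binary automaton $a=\tau(a,b)$, $b=(b,a)$ into $a=\tau(b,a)$, $b=(a,b)$, which is the inverse of the standard (Grigorchuk--\.{Z}uk) lamplighter automaton; hence it generates $\mathbb{Z}_2\wr\mathbb{Z}$ and is exactly the paper's $A_1$ from Section 4.1 of \cite{9}. Your ternary automaton is, under the correspondence of letters $0\leftrightarrow a$, $1\leftrightarrow c$, $2\leftrightarrow b$, the dual of the Bellaterra automaton $a=\sigma(c,c)$, $b=(a,b)$, $c=(b,a)$, i.e.\ exactly the paper's $A_2$, whose group is infinite by Section 1.10.3 of \cite{10}. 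With these two identifications and citations substituted for your unproved order-growth claims, your argument becomes the paper's proof; without them, the proposal establishes only the reduction, not the theorem.
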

\begin{proof}
For every automaton $A=(X, Q, \varphi, \psi)$ and for  every $k\geq 0$ let $A|^k:=(X, Q, \varphi',\psi')$ be the $k$-th restriction   of $A$, i.e. the automaton in which the transition and the output functions are defined as follows: $\varphi'_i=\varphi_i$, $\psi'_i=\psi_i$ for $1\leq i\leq k$ and $\varphi'_i(q,x)=q$, $\psi'_i(q,x)=x$ for $i>k$, $q\in Q$ and $x\in X_i$. Obviously, if $A$ is invertible (resp. reversible, bi-reversible), then so is  $A|^k$. For all $q\in Q$ and $k\geq 0$ we also have: if $w\in X^*$ is any word of length $|w|\geq k$  and $w=vu$ for some $v\in X^k$ and $u\in (X^{(k)})^*$, then $A|^k_{q}(w)=A|^k_q(vu)=A_q(v)u$. Thus the mapping $A|^k_q\colon X^*\to X^*$ can be identified with the restriction of $A_q$ to the set $X^k$. In particular,  if $A$ is invertible, then all the groups $G(A|^k)$ ($k\geq 0$) are finite and the mapping
$$
A_q\mapsto (A|^k_q)_{k\geq 0},\;\;\;q\in Q,
$$
induces an embedding of the group $G(A)$ into the Cartesian product $\prod_{k\geq 0}G(A|^k)$. If the group $G(A)$ is infinite, then there exist infinitely many pairwise non-isomorphic finite groups among the groups $G(A|^k)$ for $k\geq 0$. Otherwise, the product  $\prod_{k\geq 0}G(A|^k)$ would be  a locally finite group; and consequently the group $G(A)$ would be finite.

Let now $A_1$ be the 2-state invertible-reversible Mealy automaton over the binary  alphabet  which generates the lamplighter group $\mathbb{Z}_2\wr\mathbb{Z}$ (for the exact definition of $A_1$ see Section 4.1 in~\cite{9}). Also,  let $A_2$ be the 2-state bi-reversible Mealy automaton  over the ternary alphabet which is the dual to the 3-state Bellaterra automaton (for the construction of $A_2$ and some properties of the group $G(A_2)$ see Section~1.10.3 in~\cite{10}).  Our claim now follows  from the well-known fact that both the groups $G(A_1)$ and $G(A_2)$ are infinite.\qed
\end{proof}

\begin{remark}
Note that for all $n,d\geq 1$  the number of  $n$-state invertible Mealy automata over a $d$-letter alphabet is finite. In particular,
both the classes $IRM_{2,2}\subseteq IR_{2,2}$ and $BIRM_{2,3}\subseteq BIR_{2,3}$ of  groups generated by the corresponding Mealy automata are finite. On the other hand, if $(n,d)\neq (1,1)$, then there are uncountably many  $n$-state invertible (resp. bi-reversible) automata over a $d$-letter alphabet. In \cite{20}, we completely characterize for every $n\geq 1$ all abelian groups from  the class $I_{n,2}$ of groups generated by an $n$-state invertible automaton over the sequence of binary alphabets. In particular, we obtained that the free abelian groups $\mathbb{Z}$, $\mathbb{Z}\times \mathbb{Z}$ and the direct products $\mathbb{Z}\times \mathbb{Z}_{2^m}$ with $m\geq 1$ belong to $I_{2,2}$. Hence $I_{2,2}$ contains infinitely many pairwise non-isomorphic infinite groups. However, the constructed automata are not reversible. We are interested in the following questions: is the class $I_{2,2}$ (resp. the subclass $IR_{2,2}\subseteq I_{2,2}$) countable or uncountable? Is there only finitely many infinite groups in $IR_{2,2}$? Which non-abelian groups belong to $IR_{2,2}$ (or to $I_{2,2}$)? In particular, is it true that  $\mathcal{F}_2\in I_{2,2}$?
\end{remark}

\end{document}